\documentclass[12pt]{article}
\usepackage[margin=1.1in]{geometry}
\linespread{1.2}
\usepackage{amsmath,amssymb,amsthm, mathrsfs, multirow, float, tikz, mathtools, hyperref, booktabs, chngcntr}
\usetikzlibrary{arrows.meta}
\usepackage[ruled, vlined,linesnumbered]{algorithm2e}
\usepackage{tikz-network}
\usepackage{fancyhdr}
\pagestyle{fancy}
\fancyhf{}
\rhead{\scriptsize{\textit{\textbf{Chen, Pasiliao \& Boginski:} Polyhedral approach to least cost influence maximization}}}
\lhead{\thepage}
\usepackage[T1]{fontenc}
\usepackage[utf8]{inputenc}
\usepackage[tiny]{titlesec}
\usepackage[numbers]{natbib}

\newtheorem{proposition}{\scshape Proposition}

\newtheorem{definition}{\scshape Definition}
\newtheorem{lemma}{\scshape Lemma}
\newtheorem{exa}{\scshape Example}
\newtheorem{excont}{\scshape Example}

\let\vec\mathbf

\newcommand{\conv}{\operatorname{conv}}

\newtheorem{case}{\scshape Case}
\counterwithin*{case}{subsection} 

\providecommand{\keywords}[1]
{
  \small	
  \textbf{\textit{Keywords---}} #1
}

\title{\vspace{-1cm}\large{\bf{A Polyhedral Approach to Least Cost Influence Maximization in Social Networks}}}
\author{\normalsize{Cheng-Lung Chen} \vspace{-0.1cm}  \\ \footnotesize{Industrial Engineering \& Management Systems, University of Central Florida, USA.} \vspace{-0.2cm}\\
\footnotesize{\texttt{chenglung@knights.ucf.edu}}\\
\normalsize{Eduardo L. Pasiliao} \vspace{-0.1cm} \\ \footnotesize{Air Force Research Laboratory, Eglin AFB, USA.} \vspace{-0.2cm} \\
\footnotesize{\texttt{eduardo.pasiliao@us.af.mil}}\\
\normalsize{Vladimir Boginski} \vspace{-0.1cm} \\ \footnotesize{Industrial Engineering \& Management Systems, University of Central Florida, USA.} \vspace{-0.2cm} \\
\footnotesize{\texttt{vladimir.boginski@ucf.edu}}
} 

\date{}

\begin{document}
\maketitle

\begin{abstract}
\noindent The least cost influence maximization problem aims to determine minimum cost of partial (e.g., monetary) incentives initially given to the influential spreaders on a social network, so that these early adopters exert influence toward their neighbors and prompt influence propagation to reach a desired penetration rate by the end of cascading processes. We first conduct polyhedral analysis on a substructure that describes influence propagation assuming influence weights are unequal, linear and additively separable. Two classes of facet-defining inequalities based on a mixed 0-1 knapsack set contained in this substructure are proposed. We characterize another exponential class of valid and facet-defining inequalities utilizing the concept of minimum influencing subset. We show that these inequalities can be separated in polynomial time efficiently. Furthermore, a polynomial-time dynamic programming recursion is presented to solve this problem on a simple cycle graph. For arbitrary graphs, we propose a new exponential class of valid inequalities that dominates the cycle elimination constraints and an efficient separation algorithm for them. A compact convex hull description for a special case is presented. We illustrate the effectiveness of these inequalities via a delayed cut generation algorithm in the computational experiments.
\end{abstract}

\keywords{Influence Maximization, Social Networks, Valid Inequalities, Delayed Cut Generation}

\section{Introduction}
Intricate connections between entities in many natural and man-made systems form large complex networks. Of particular interest in network science is to gain insight into the dynamic of cascading processes in complex networks. For instance, the spreading of new behaviors, opinions, technologies, conventions, and gossips from person to person through a social network may play an important role in designing competitive marketing strategies. Indeed, the collection of social ties among consumers can be exploited to select pivotal early adopters for initiation and anticipate the time and cost required for the information propagation. Therefore, a key research question has been focused on how to efficiently identify a set of users to disseminate a certain information within the network, result in an increasing trend in studying social influence and information propagation in social networks (see, e.g., \cite{chen2013information, kempe2015maximizing}). As it may be expected, the spreading of social influence is commonly studied on network graphs that consist of nodes and links representing users and their connections, as well as a framework that describes how the information propagates among various intermediate users over time. Granovetter \cite{granovetter1978threshold} first presented the threshold model to simulate the collective force exerted by a group on each of its members for predicting innovation and rumor diffusion, voting trends, and migration.  A distinct threshold value is assigned to each user representing the proportion of neighbors who make a decision before a particular user makes such decision. Many extensions built on the threshold model have been proposed to encompass different circumstances. Among them, the linear threshold model and independent cascade model are the two most popular and well-studied one. Kempe et al.  \cite{kempe2003maximizing} study the linear threshold and independent cascade models under the influence maximization (IM) problem. The goal of IM is to activate as many nodes as possible by the end of propagation process given selected influential nodes within budget. Specifically, they show that the expected influence spread is a monotone submodular function, which can be approximated by greedy algorithm with performance guarantee $1-\frac{1}{e}-\epsilon$. On the other hand, the target set selection (TSS) introduced by \cite{chen2009approximability} aims to find the minimum number of users required initially and activate the entire network through the propagation process. These two types of problems are fundamental in social network analysis and have many variants. Here we refer the reader to 
a survey of hardness and solution approaches for TSS \cite{banerjee2020survey}, a review on approximation algorithms for IM \cite{li2018influence}, and a comprehensive review on introduction to types of social networks, properties, evaluation metrics and known method to solve IM \cite{peng2018influence}. Recently, Azaouzi et al. \cite{azaouzi2021new} presented a comprehensive review on models and methods of group-level IM and IM under privacy protection. 

In this paper, we study an optimization problem arising in social network analytics, referred to as the \emph{least cost influence maximization (LCIM)} problem \cite{fischetti2018least, gunnecc2020branch, gunnecc2020least}. The propagation process in LCIM is based on the linear threshold model, where each user is assigned a real-valued threshold and each link between users is assigned a weight to represent influence level. If the summation of influence weights from neighbors exceeds one’s threshold value, then the individual is \textit{activated}, meaning that the information is adopted or the person changes their opinion to align with friends (neighbor nodes) in the social network. In LCIM, the concept of influence weights is extended with the idea of giving partial incentives such as free samples or coupons to individual as a motivation for spreading information. The assumption of the linear threshold model allows the incentives and influence weights in the activation processes to be linearly additive, which immediately admits a mixed-integer optimization formulation for the influence propagation. Our goal is to develop an exact computational method based on the polyhedral structure. We assume that all the parameters are deterministic, all the nodes are inactive initially and nodes remain activated once the influence weights exceeds the threshold. A similar assumption on using deterministic parameters in linear threshold model can be found in \cite{gursoy2018influence}. From a practical point of view, the assumption of deterministic linear threshold depends on the accuracy of estimation of influence factors and threshold parameters. Machine learning and data mining techniques may enable one to obtain accurate predictions on those parameters from massive amounts of data available nowadays. Note that when incentive is allowed and the influence weights and thresholds are considered known in the problem, the influence propagation is no longer submodular, thus, a greedy algorithm is not applicable for arbitrary graphs. However, the problem is still challenging to solve due to the combinatorial nature. The minimum subset of nodes required under dynamic activation is also referred to as the dynamic monopoly, where nodes become activated if at least half of the neighbors are activated in the previous time period. Hence, an extra index of time is incorporated into the integer programming formulation to describe step-by-step activation of nodes, see \cite{moazzez2018integer,moazzez2021facets,soltani2019polyhedral}.  We do not consider time dynamics in our problem despite the fact that the influence propagation occurs in discrete steps. Moreover, the induced optimal influence propagation graph for LCIM in the static network setting given by the mixed-integer programming formulation is known to be acyclic \cite{fischetti2018least}. 

Even though IM, TSS and LCIM problems share certain similarities, most of the previous studies on IM or TSS mainly focus on developing degree-based or centrality-based metaheuristics and approximation algorithms. Existing studies on using exact mixed-integer optimization techniques for LCIM under deterministic data settings are relatively limited. The computational complexity of LCIM is established in \cite{gunnecc2020least}. In particular, they show that LCIM is NP-hard on arbitrary graphs and bipartite graphs for both equal and unequal influence when 100\% penetration rate is not required. They also give a greedy algorithm and a total unimodular formulation for LCIM with equal influence on a tree and 100\% penetration rate. G{\"u}nne{\c{c}} et al. \cite{gunnecc2020branch} develop a branch-and-cut algorithm using this total unimodular formulation for LCIM on arbitrary graphs. Fischetti et al. \cite{fischetti2018least} present a novel set covering formulation for a generalized LCIM, where the activation function can be adjusted to be nonlinear in order to capture the situation of diminishing marginal influences or over-proportional effect from peers. They propose strengthened generalized propagation inequalities and a price-cut-and-branch algorithm to deal with the exponential number of variables and constraints. Recent developments on using exact mixed-integer optimization methods for stochastic influence maximization problem include a delayed constraint generation algorithm for a two-stage stochastic influence maximization problem \cite{wu2018two} and a branch-cut-and-price algorithms for robust a influence maximization, where node thresholds and arc influence factors are subject to budget uncertainty \cite{nannicini2020robust}.

\subsection{Notation and problem definition}
For convenience, we use the notation $[a,b]$ to denote the set $\{a,a+1,\ldots,b\}$ for $a\leq b$, and $[a,b] = \varnothing$ for $a > b$. For a set $\mathcal{R} \subseteq \mathbb{R}^n$, we use $\conv{(\mathcal{R})}$ to denote its convex hull of solutions. Formally, an oriented network (e.g., a social network) is represented by a graph $G=(V,E)$ with the set of nodes $V:=\{1,\ldots,n\}$ corresponding to people or users and the set of bidirectional arcs $E \subseteq \{(i,j) \in V \times V: (j,i) \in V \times V, i \neq j\}$ with cardinality $m$ corresponding to connections and influence directions between people in the network. Hence, each node has identical number of predecessors and successors, denoted by $v_i$: $v_i = |N(i)|$, where $N(i) := \{j \in V: (j,i) \in E\}$. Each node $ i \in V$ has a threshold value $h_i$ measuring the ``difficulty level'' of an individual to be ``activated''. 
Each arc $(i,j) \in E$ is associated with an influence weight $d_{ij}$. The coverage (penetration) rate is denoted by $a$, where $ 0 < a \leq 1$ is assumed. We also assume that $d_{ij}$ and $h_i$ are positive integers such that $\max\{d_{ji}: j \in N(i) \} \leq h_i$ and $\sum_{j \in N(i)}d_{ji} > h_i$ for all $i \in V$ such that $|N(i)|\geq 2$ throughout this paper to omit trivial cases. For any $d_{ji} > h_i$ and $|N(i)| = 1$ for some $i \in V$, we pre-process the data to set $d_{ji} = h_i$. All nodes are assumed inactive initially and nodes remain active once influences from neighbors and incentives reach or exceed the threshold. For each node $ i \in V$, let nonnegative continuous variables $x_i$ be the amount of partial incentives given to user $i$, binary variables $y_{ij}$ indicate whether influence is exerted from node $i$ to $j$ nor not, and binary variables $z_i$ indicate whether node $i$ is activated or not. The arc-based formulation of static LCIM is given by
\begin{subequations}
\begin{align}
\text{(LCIM) } \min\limits_{x,y,z} \quad & \sum_{i \in V} x_i  \notag  \\
\text{s.t.  } & x_i + \sum_{j \in N(i)}d_{ji}y_{ji} \geq h_iz_i \quad \forall i \in V \label{LCIM1} \\
& y_{ij} + y_{ji} \leq z_i \quad \forall (i,j) \in E  \label{LCIM2} \\
&   \sum\limits_{i \in V} z_i \geq b \label{LCIM3} \\
& \sum_{(i,j) \in C} y_{ij} \leq \sum_{i \in V(C)\setminus \{k\}} z_i  \quad \forall k \in V(C), \forall \text{ cycles } C \subseteq E \label{LCIM4} \\
& x \in \mathbb{R}_+^n \notag  \\
& y \in \mathbb{B}^m, z \in \mathbb{B}^n. \notag
\end{align}
\end{subequations}
Node propagation constraints \eqref{LCIM1} follow the linear threshold model by evaluating the total incoming influence from neighbor plus the incentives given to a node. Constraints \eqref{LCIM2} state that for every two nodes with bidirectional arcs, the influence exertion is allowed in one way only. The cardinality constraint \eqref{LCIM3} describes the desired number of activated nodes given a predetermined penetration rate $a$, where $b=\lceil an \rceil$ and $1\leq b \leq n$.  Constraints \eqref{LCIM4} are generalized cycle elimination constraints (GCEC) where $V(C) = \{i \in V: (i,j) \in C\}$. They are necessary to produce the acyclic optimal influence propagation graph. Note that the arc-based formulation proposed by \cite{ackerman2010combinatorial} is different from this paper as the influence weights in their model are coming solely from their neighbors without incentives. Fischetti et al. \cite{fischetti2018least} adopt this arc-based formulation on arbitrary graphs for computational performance comparison, but possible values of incentives are discretized by a set of an exponential number of binary variables.

\subsection{Main contributions}
The key contributions of this paper are summarized as follows. We conduct a polyhedral study for the hidden mixed 0-1 knapsack substructure in \eqref{LCIM1} of LCIM. We propose three classes of strong valid inequalities, namely, the continuous cover, continuous packing and the minimum influencing subset inequalities from this substructure and specify the conditions under which they are facet-defining. The coefficient of these inequalities can be adjusted to equal influence assumption directly. We have improved the run time of the separation algorithm for the the continuous cover and continuous packing inequalities compared with our preliminary results presented in \cite{chen2020cutting}. In addition, we provide a polynomial-size complete linear description of the polyhedron of LCIM on a tree when equal influence weights for every node and 100\% coverage are assumed. For LCIM on arbitrary bidirectional network graphs, we derive a novel class of strong valid inequalities called the the $(U,C)$ inequalities and show they dominate the general cycle elimination constraints. Finally, we augment the preliminary computations in \cite{chen2020cutting} with different formulations, cutting plane strategies, density and scale of networks in extensive computational experiments. 

\subsection{Outline}
The remainder of this paper is organized as follows. In Section \ref{sec:2}, we derive strong valid inequalities from the mixed 0-1 knapsack substructure and give an exact polynomial time separation algorithm for these inequalities. In Section \ref{sec:3}, we discuss an $O(n)$ time dynamic programming recursion for LCIM on a simple cycle. We propose the $(U,C)$ inequalities for arbitrary bidirectional graphs and develop a polynomial time separation algorithm via solving the longest path problem on a directed acyclic graph. We present the convex hull description of LCIM on a special case in Section \ref{sec:4}. Finally, we illustrate the effectiveness of our proposed valid inequalities in the computational experiments in Section \ref{sec:5} and conclude with Section \ref{sec:6}. 

\section{Valid inequalities based on mixed 0-1 knapsack polyhedron} \label{sec:2}
To develop a strong formulation for LCIM, we study the polyhedral structure of constraints \eqref{LCIM1}.  For $i \in V$, let
\begin{align}
     \mathcal{P}_i = \left\{(x_i,y,z_i) \in \mathbb{R}_+ \times \mathbb{B}^{v_i+1}: x_i + \sum_{j \in N(i)}d_{ji} y_{ji} \geq h_iz_i\right\}.  \notag
\end{align}
The set $\mathcal{P}_i$ is a mixing set with a binary variable on the right-hand side value. For any inequality that is facet-defining for $\conv{(\mathcal{P}_i)}$, it is facet-defining for $\conv{(\cap_{i\in V} \mathcal{P}_i)}$ as well. Therefore, we now consider a single node propagation by dropping the subscript $i$ and obtain the following set
\begin{align}
     \mathcal{P} = \left\{(x,y,z) \in \mathbb{R}_+ \times \mathbb{B}^{v+1} : x + \sum_{j \in N}d_j y_j \geq hz\right\}. \notag
\end{align}
Observe that the set $\mathcal{P}$ contains a mixed 0-1 knapsack structure. Let set $\mathcal{\overline{P}}$ obtained from $\mathcal{P}$ by setting $\overline{y}_j = 1 - y_j$, $j \in N$ and $z=1$, then we have a mixed 0-1 knapsack set $\overline{\mathcal{P}}$ with weight $d_j$ for each item $j \in N$ and the capacity of knapsack $\sum_{j \in N}d_j - h $ plus an unbounded continuous variable $x$ in the following
\begin{align}
      \overline{\mathcal{P}} = \left\{(x,\overline{y},z) \in \mathbb{R}_+ \times \mathbb{B}^v \times \{1\}: \sum_{j \in N}d_j \overline{y}_j \leq \left(\sum_{j \in N}d_j - hz\right) + x \right\}. \notag
\end{align}

The mixed 0-1 knapsack set $\overline{\mathcal{P}}$ is a special case of traditional 0-1 knapsack problem where the knapsack size is expanded with additional capacity. Observe that $\dim(\mathcal{P})$ is full-dimensional and contains the origin. There are trivial facets for $\conv{(\mathcal{P})}$ and their verification is straightforward. 

\begin{proposition} The following facet-defining inequalities for $\conv{(\mathcal{P})}$ are trivial. \label{prop:trivial}
\begin{itemize}
    \item[(i)] The inequality $x + \sum_{j \in N}d_j y_j \geq hz$ is facet-defining for $\conv{(\mathcal{P})}$ if $d_j \leq h$ for all $j \in N$. 
    \item[(ii)] The inequality $x \geq 0$ is facet-defining for $\conv{(\mathcal{P})}$ if $d_j \leq h$ for all $j \in N$. 
    \item[(iii)] The inequality $y_j \geq 0$ is facet-defining for $\conv{(\mathcal{P})}$.
    \item[(iv)] The inequality $y_j \leq 1$ is facet-defining for $\conv{(\mathcal{P})}$.
\end{itemize}
\end{proposition}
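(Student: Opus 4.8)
The plan is to use the standard facet criterion for a full-dimensional polyhedron: the excerpt already records that $\conv(\mathcal{P})$ is full-dimensional in $\mathbb{R}^{v+2}$ (with coordinates $x$, the $v$ arc variables $y_j$, and $z$), so a valid inequality is facet-defining exactly when its defining hyperplane contains $v+2$ affinely independent points of $\mathcal{P}$. For each of (i)--(iv) I would therefore write down an explicit list of $v+2$ feasible points lying on the relevant hyperplane and verify affine independence by checking that the $v+1$ difference vectors, taken against a fixed base point, are linearly independent. In every case these differences can be arranged into a block-triangular / unit-vector pattern, so the independence check requires essentially no computation.

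For the two bound inequalities (iii) and (iv) no hypothesis on the $d_j$ is needed. For $y_j\ge 0$ I would take the origin $(0,\mathbf 0,0)$ as base point, add $(1,\mathbf 0,0)$ to recover the $x$-direction, the $v-1$ points $(0,e_k,0)$ for $k\in N\setminus\{j\}$ to recover the remaining $y$-directions, and the activated point $(h,\mathbf 0,1)$ (feasible since $x=h\ge hz$) to recover the $z$-direction; all satisfy $y_j=0$, and the differences span the $(v+1)$-dimensional coordinate subspace generated by the $x$-, $z$-, and $y_k$-axes with $k\ne j$. The face $y_j\le 1$ is then handled by translating this whole configuration by $e_j$ (adding $1$ to every $y_j$-coordinate), which keeps each point feasible and moves the list onto $\{y_j=1\}$.

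For the propagation inequality (i) and the nonnegativity facet (ii) the activated slice $z=1$ must be used, and this is where $d_j\le h$ (together with the standing assumptions) enters. For (i) the face $\{x+\sum_j d_j y_j=hz\}$ meets $\{z=0\}$ only at the origin, so the remaining $v+1$ points must all be activated: I would take $(h,\mathbf 0,1)$ together with $(h-d_j,e_j,1)$ for $j\in N$, where the hypothesis $d_j\le h$ is exactly what guarantees $x=h-d_j\ge 0$ so that these points lie in $\mathcal{P}$. Subtracting $(h,\mathbf 0,1)$ from $(h-d_j,e_j,1)$ yields the $v$ independent vectors $(-d_j,e_j,0)$, which together with $(h,\mathbf 0,1)$ recover the $z$-direction, giving rank $v+1$. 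For (ii) the slice $\{x=0,z=0\}$ already supplies the $v+1$ points $(0,\mathbf 0,0)$ and $(0,e_k,0)$, $k\in N$, spanning the $y$-directions, so I only need one activated point at $x=0$ to recover the $z$-direction; the standing assumption $\sum_{j\in N} d_j> h$ (or $d_j=h$ after preprocessing when $v=1$) makes $(0,\mathbf 1,1)$ feasible and supplies it.

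The main obstacle I anticipate is case (i): its face is ``thin'' in the $z=0$ slice, so all but one of the required points are forced simultaneously onto the equality surface and into the binary slice $z=1$, leaving no slack and relying essentially on $d_j\le h$ to keep the continuous coordinate nonnegative. Once the correct activated points are identified, the remaining cases are routine, and the only leftover work, the linear-independence check of the difference vectors, reduces in each case to reading off the triangular structure described above.
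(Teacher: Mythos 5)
Your proposal is correct: the paper itself omits the verification of Proposition \ref{prop:trivial} (stating only that it is straightforward), and your argument supplies exactly the standard check that is implied — exhibiting $v+2$ affinely independent feasible points on each face of the full-dimensional set $\conv(\mathcal{P})$, with the hypothesis $d_j\le h$ entering precisely to make the points $(h-d_j,e_j,1)$ feasible in case (i) and the standing assumption $\sum_{j\in N}d_j>h$ supplying the activated point $(0,\mathbf 1,1)$ in case (ii). The point lists, the feasibility checks, and the triangular linear-independence arguments are all valid, so nothing further is needed.
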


 The first result of Proposition \ref{prop:trivial} validates our data pre-processing assumption for influence weight and threshold when $d_{ij} > h_i$. To show this, consider a single node influence propagation inequality 
 \begin{align}
x + \sum_{j \in N: d_j \leq h}d_j y_j + \sum_{j \in N: d_j > h}d_j y_j \geq hz. \notag 
 \end{align}
 Clearly, this inequality is dominated by 
 \begin{align}
x + \sum_{j \in N: d_j \leq h}d_j y_j + \sum_{j \in N: d_j > h}h y_j \geq hz \notag 
 \end{align}
 as the second inequality is faced-defining. Hence, we can obtain the facet-defining inequality by simply substituting any influence weight $d_{ji}$ with $h_i$ for all $j \in N(i)$ if $d_{ji} > h_i$. Marchand and Wolsey \cite{marchand19990} propose two classes of valid inequalities for $\overline{\mathcal{P}}$ based on mixed-integer rounding and lifting function, namely, the continuous cover and continuous packing (reversed cover) inequalities.  We follow a similar idea to strengthen the formulation for LCIM with moderate modification as $\overline{\mathcal{P}} \subset \mathcal{P}$. Applications of continuous cover and continuous packing inequalities for $\mathcal{\overline{P}}$ can be seen in several fields, including delay management for public transportation \cite{dal2019strengthened}, job scheduling with uncertain multiple resources \cite{keller2009scheduling}, discrete lot sizing \cite{loparic2003dynamic} and single-item capacitated lot sizing \cite{miller2000capacitated}.  They can also be extended to solve general mixed-integer optimization that contains mixed 0-1 knapsack set with bounded continuous variables, see \cite{narisetty2011lifted,richard2003lifteda,richard2003liftedb}.

\subsection{Continuous cover and continuous packing inequalities}
Consider a continuous cover $S := \{1,\ldots,s\} \subseteq N$ such that $h + \sum_{j \in S}d_j - \sum_{j \in N}d_j = \pi > 0$ and  $h + \sum_{j \in S \setminus \{k\}}d_j - \sum_{j \in N}d_j < 0$ for any $k \in S$. Let $d_j \in S$ be in non-increasing order with $d_1 \geq \ldots \geq d_r > \pi \geq d_{r+1} \geq \ldots \geq d_s$, $D_j = \sum_{k=1}^j d_k$ for $j \leq r$ and $D_0 = 0$. 
\begin{proposition}
The continuous cover inequality
\begin{align}
    &x + \sum_{j \in S} \min\{\pi, d_j\}y_j + \sum_{j \in N \setminus S}\Phi(d_j)y_j \geq  \left(\min\limits_{j\in S} \{\pi, d_j\} + \sum_{j \in N \setminus S}\Phi(d_j) \right)z  \label{cci}
\end{align}
where 
\begin{align}
\Phi(d) = 
    \begin{cases}
     j\pi &  D_j \leq d \leq D_{j+1} - \pi, \quad j \in [0,r-1] \\
     j\pi + d - D_j & D_j - \pi \leq d \leq D_j, \quad j \in [1,r-1] \label{phiS}  \\
     r\pi + d - D_r  & D_r - \pi \leq d,
    \end{cases}
\end{align}
is valid for $\mathcal{P}$. 
\end{proposition}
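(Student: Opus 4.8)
The plan is to verify \eqref{cci} pointwise on $\mathcal{P}$ by branching on the binary value of $z$ and then to recognize the nontrivial face as the continuous cover inequality of Marchand and Wolsey \cite{marchand19990} for the knapsack set $\overline{\mathcal{P}}$. When $z=0$ the right-hand side vanishes while the left-hand side is a nonnegative combination of $x\ge 0$ and the $y_j\ge 0$ with nonnegative coefficients $\min\{\pi,d_j\}$ and $\Phi(d_j)$; hence the inequality holds trivially and it remains only to treat the face $z=1$.

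On $z=1$ I would substitute the complemented variables $\overline{y}_j = 1-y_j$, under which \eqref{LCIM1} becomes exactly the mixed $0$-$1$ knapsack constraint defining $\overline{\mathcal{P}}$ and $S$ becomes a cover with excess $\pi>0$. A direct rearrangement then matches \eqref{cci} (restricted to $z=1$) with the lifted cover inequality built from the \emph{seed} $x+\sum_{j\in S}\min\{\pi,d_j\}y_j\ge \min_{j\in S}\{\pi,d_j\}$ on the cover variables, with the coefficients $\Phi(d_j)$ acting as the lifting coefficients of the variables $j\in N\setminus S$. Using the stated minimality of $S$, which forces $d_j>\pi$ for every $j\in S$, the coefficients collapse to $\min\{\pi,d_j\}=\pi$ and the $z$-coefficient to $\pi+\sum_{j\in N\setminus S}\Phi(d_j)$, so the seed is simply $x+\pi\sum_{j\in S}y_j\ge\pi$, whose validity on the restriction $y_j=0,\ j\in N\setminus S$, is immediate from $x\ge h-\sum_{j\in S}d_jy_j$ together with $\pi\le h$.

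The substantive step is to show that lifting all variables $j\in N\setminus S$ with coefficient $\Phi(d_j)$ keeps the inequality valid. Self-containedly, I would fix a feasible point with $z=1$, set $A=\{j\in S:y_j=1\}$ and $T_0=\{j\in N\setminus S:y_j=0\}$, and use the sharp lower bound $x\ge\max\{0,\ \pi+\sum_{j\in T_0}d_j-\sum_{j\in A}d_j\}$ coming from \eqref{LCIM1}. Substituting this into \eqref{cci} reduces the claim, in the regime where the knapsack bound is binding, to the inequality $\sum_{j\in T_0}\big(d_j-\Phi(d_j)\big)\ge\sum_{j\in A}(d_j-\pi)$, and in the regime $x=0$ to $\pi(|A|-1)\ge\sum_{j\in T_0}\Phi(d_j)$; all remaining terms reduce to routine sign and nonnegativity checks.

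The main obstacle is establishing these two $\Phi$-estimates, which is exactly where the piecewise-linear definition \eqref{phiS} enters. I expect to handle them by ordering the weights in $T_0$ and tracking their cumulative sums against the breakpoints $D_j$ and $D_j-\pi$: on $[D_j,\,D_{j+1}-\pi]$ the function $\Phi$ is flat at $j\pi$ and on $[D_j-\pi,\,D_j]$ it increases with unit slope, and it is precisely this profile that guarantees $d-\Phi(d)$ accumulates enough slack to dominate the cover contribution $\sum_{j\in A}(d_j-\pi)$. Equivalently, one may prove once and for all that $\Phi$ is superadditive and underestimates the exact lifting function of the seed, after which simultaneous lifting is valid by the standard argument; either way the delicate content is the breakpoint bookkeeping for $\Phi$, while the reduction to it is elementary.
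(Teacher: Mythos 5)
Your main line---branch on $z$, dispose of $z=0$ by nonnegativity of $x$, $y$, $\pi$ and $\Phi$, and for $z=1$ complement $\overline y_j=1-y_j$ to land in the mixed 0--1 knapsack set $\overline{\mathcal P}$, where \eqref{cci} is the Marchand--Wolsey lifted continuous cover inequality---is exactly the argument the paper intends; the paper omits the details and simply cites \cite{marchand19990} for the $z=1$ face. Two caveats on your additional self-contained material. First, you attach the seed $x+\pi\sum_{j\in S}y_j\ge\pi$ to the wrong face: restricting \eqref{cci} at $z=1$ to $y_j=0$ for $j\in N\setminus S$ leaves right-hand side $\pi+\sum_{j\in N\setminus S}\Phi(d_j)$, not $\pi$; the seed with right-hand side $\pi$ is the restriction to $y_j=1$ for $j\in N\setminus S$ (i.e.\ $\overline y_j=0$), and it is \emph{down}-lifting from that face that places $\Phi(d_j)$ on both sides of the inequality. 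Lifting up from $y_j=0$ as you describe would keep the right-hand side at $\pi$ and produce a strictly weaker inequality than \eqref{cci}. Your subsequent case analysis with $A=\{j\in S:y_j=1\}$ and $T_0=\{j\in N\setminus S:y_j=0\}$ is unaffected, since it verifies the final inequality directly and the reduction to the two estimates $\sum_{j\in T_0}\bigl(d_j-\Phi(d_j)\bigr)\ge\sum_{j\in A}(d_j-\pi)$ and $\pi(|A|-1)\ge\sum_{j\in T_0}\Phi(d_j)$ is correct (you rightly use that minimality of $S$ forces $d_j>\pi$ on $S$). Second, however, those two estimates are precisely the substance of the proposition---they encode the superadditive-lifting content of \eqref{phiS}---and you state them without proof, deferring to ``breakpoint bookkeeping'' or to superadditivity of $\Phi$ plus the standard simultaneous-lifting theorem. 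As written, your argument therefore still ultimately rests on the citation to \cite{marchand19990}, which is no less than what the paper does, but the self-contained portion should not be read as complete.
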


Similarly, consider a continuous packing $L := \{1,\ldots,l\} \subseteq N$ such that $\sum_{j \in L}d_j - h = \lambda > 0$ and  $\sum_{j \in L \setminus \{k\}}d_j - h  < 0$ for any $k \in L$. Let $d_j \in L$ be in non-increasing order with $d_1 \geq \ldots \geq d_r > \lambda \geq d_{r+1} \geq \ldots \geq d_l$, $D_j = \sum_{k=1}^j d_k$ for $j \leq r$ and $D_0 = 0$. 

\begin{proposition}
The continuous packing inequality
\begin{align}
x + \sum_{j \in L} \max\{0, d_j-\lambda\}y_j + \sum_{j \in N \setminus L}\Psi(d_j)y_j \geq \left(\sum_{j \in L} \max\{0, d_j-\lambda\}\right)z  \label{cpi}   
\end{align}
where
\begin{align}
\Psi(d) = 
    \begin{cases}
    d - j\lambda &  D_j \leq d \leq D_{j+1} - \lambda, \quad j \in [0, r-1] \\
    D_j - j\lambda & D_j - \lambda \leq d \leq D_j, \quad j \in [1,r-1] \label{psiT}  \\
    D_r - \lambda r & D_r - \lambda \leq d,
    \end{cases}
\end{align}
is valid for $\mathcal{P}$. 
\end{proposition}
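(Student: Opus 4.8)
The plan is to prove validity by splitting on the binary variable $z$ and reducing the nontrivial case to a seed inequality on $L$ together with a lifting of the variables in $N\setminus L$. When $z=0$ the right-hand side vanishes, and since $x\ge 0$ and every coefficient $\max\{0,d_j-\lambda\}$ and $\Psi(d_j)$ is nonnegative (immediate from \eqref{psiT}, using $d_k>\lambda$ for $k\le r$), the inequality holds trivially. Hence it suffices to treat $z=1$, where the defining constraint of $\mathcal P$ reads $x+\sum_{j\in N}d_jy_j\ge h$, i.e. $x\ge\max\{0,\,h-\sum_{j\in N}d_jy_j\}$. I would first record the consequence of minimality: since $\sum_{j\in L\setminus\{k\}}d_j<h$ for every $k\in L$ and $\sum_{j\in L}d_j=h+\lambda$, we get $d_k>\lambda$ for all $k\in L$; thus $r=l$, each $\max\{0,d_j-\lambda\}=d_j-\lambda$, and the target right-hand side is $\beta:=\sum_{j\in L}(d_j-\lambda)=D_r-r\lambda$.

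Next I would establish the \emph{seed inequality} obtained by fixing $y_j=0$ for $j\in N\setminus L$, namely $x+\sum_{j\in L}(d_j-\lambda)y_j\ge\beta$ on the $z=1$ slice. Writing $\overline y_j=1-y_j$ and $T=\{j\in L:\overline y_j=1\}$, the constraint gives $x\ge\max\{0,\sum_{j\in T}d_j-\lambda\}$, while the claim is equivalent to $x\ge\sum_{j\in T}(d_j-\lambda)$. Because every $d_j>\lambda$, any nonempty $T$ satisfies $\sum_{j\in T}d_j>\lambda$, so $\max\{0,\sum_{j\in T}d_j-\lambda\}=\sum_{j\in T}d_j-\lambda\ge\sum_{j\in T}d_j-|T|\lambda=\sum_{j\in T}(d_j-\lambda)$, and the empty case is trivial; this proves the seed.

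Then I would lift the variables $y_k$, $k\in N\setminus L$, back in. Turning on a set $A'\subseteq N\setminus L$ relaxes the $L$-constraint to $x+\sum_{j\in L}d_jy_j\ge h-\delta$ with $\delta=\sum_{k\in A'}d_k$, so the total coefficient handed to $A'$ must dominate the \emph{credit function}
\[
\Psi^\ast(\delta)=\beta-\min\Big\{x+\sum_{j\in L}(d_j-\lambda)y_j:\ x+\sum_{j\in L}d_jy_j\ge h-\delta,\ x\ge0,\ y\in\mathbb B^{|L|}\Big\}.
\]
I would compute this inner minimum in closed form and verify that it is exactly the piecewise expression $\Psi$ of \eqref{psiT}, so that $\Psi(d_k)=\Psi^\ast(d_k)$ on each of the three regimes delimited by the breakpoints $D_j$ and $D_j-\lambda$ (i.e. whether $d_k$ lands in a flat gap between consecutive partial sums, on a ramp, or in the saturated tail $d_k\ge D_r-\lambda$). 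Validity for simultaneously turning on all of $A'$ then follows once $\Psi^\ast$ is shown to be \emph{subadditive}, since $\sum_{k\in A'}\Psi(d_k)=\sum_{k\in A'}\Psi^\ast(d_k)\ge\Psi^\ast(\sum_{k\in A'}d_k)=\Psi^\ast(\delta)$; combining with the seed gives the $z=1$ inequality, and the $z$-homogenization above completes the proof. This is the $\mathcal P$-analogue of the reversed-cover (continuous packing) inequality of Marchand and Wolsey \cite{marchand19990}, the only modifications being the complementation $\overline{\mathcal P}\subset\mathcal P$ and the extra $z=0$ case.

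The main obstacle is this last step: pinning down the closed form of $\Psi^\ast$ and proving its subadditivity (equivalently, that $\Psi$ is the superadditive lifting function of the complemented knapsack $\overline{\mathcal P}$). This requires a careful case analysis across every pair of linear pieces of \eqref{psiT} and at all breakpoints $D_j$ and $D_j-\lambda$, $j\in[1,r]$, which is where the three-regime structure of $\Psi$ becomes unavoidable. Everything else — nonnegativity of the coefficients, the seed inequality, and the $z$-split — is routine once the knapsack structure of the restricted minimization is identified.
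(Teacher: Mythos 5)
Your proposal is correct and follows essentially the same route as the paper, which disposes of the $z=0$ case trivially and for $z=1$ defers the entire seed-plus-lifting argument to Marchand and Wolsey \cite{marchand19990}; the closed form of the lifting function $\Psi^\ast$ and its subadditivity, which you flag as the remaining obstacle, is precisely the content of that citation, so nothing beyond what you outline is needed. Your $z=0$ argument via nonnegativity of the coefficients is in fact more careful than the paper's, which asserts $y_j=0$ for all $j\in N$ --- a fact forced in the full LCIM polytope by constraints \eqref{LCIM2} but not within $\mathcal{P}$ itself.
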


Here we omit the proofs as the validity of inequalities \eqref{cci} and \eqref{cpi} and their facet-defining conditions for $\conv{(\mathcal{P})}$ directly follow from \cite{marchand19990} when $z=1$, while when $z=0$, we must have $y_j = 0$ for all $j \in N$, both inequalities are trivially satisfied and facet-defining according to Proposition \ref{prop:trivial}. 

\begin{exa}\label{ex1}
 \normalfont Let $d = (7,6,5,4)$ and $h=8$, we list the facet-defining inequalities of inequality \eqref{cci} and \eqref{cpi} in Table \ref{tab:ex1}. For example, for $S=\{1,2,4\}$, we have $\pi = 3$ and $r = 3$. Then the lifting function $\Phi$ is given by
\begin{align*}
\Phi(d) = 
    \begin{cases}
    0 & 0 \leq d \leq 4 \\
    3 & 7 \leq d \leq 10 \\
    6 & 13 \leq d \leq 14 \\
    d-4 & 4 \leq d \leq 7 \\
    d-7 & 10 \leq d \leq 13 \\
    d-8 &  14 \leq d 
    \end{cases}   
\end{align*}
Hence, the coefficient of $y_3$ is $\Phi(d_3) = \Phi(5) = 5 - 4 =1$, which generates 
\begin{align}
x +3y_1 + 3y_2 + y_3 + 3y_4 \geq 4z. \notag    
\end{align}

\begin{table}[H]
\caption{Continuous cover and continuous packing inequalities of Example \ref{ex1}}
\centering
\begin{tabular}{ccc}
\toprule
\multicolumn{2}{c}{$x + 7y_1 + 6y_2 + 5y_3 + 4y_4 \geq 8z$} \\ \hline
 set & facet-defining inequality \\ \hline
$S = \{2,3,4\}$, $L = \{1,2\}$ & $x + y_1 + y_2 + y_3 + y_4 \geq 2z$ \\
$S = \{1,3,4\}$, $L = \{1,2\}$ & $x +2 y_1 + y_2 + 2y_3 + 2y_4 \geq 3z$ \\
$S = \{1,2,4\}$, $L = \{1,3\}$ & $x +3y_1 + 3y_2 + y_3 + 3y_4 \geq 4z$ \\
$S = \{1,2,3\}$, $L = \{1,4\}$ & $x +4y_1 + 4y_2 + 4y_3 + y_4 \geq 5z$ \\
$S = \{1,2,4\}$, $L = \{2,3\}$ & $x +4y_1 + 3y_2 + 2y_3 + 3y_4 \geq 5z$ \\
$S = \{1,2,3\}$, $L = \{2,4\}$ & $x +5y_1 + 4y_2 + 4y_3 + 2y_4 \geq 6z$ \\
$S = \{1,2,3\}$, $L = \{3,4\}$ & $x +6y_1 + 5y_2 + 4y_3 + 3y_4 \geq 7z$ \\ \bottomrule
\end{tabular}
\label{tab:ex1}
\end{table}
\end{exa}

Essentially, the continuous cover inequalities \eqref{cci} and packing inequalities \eqref{cpi} are not sufficient to describe $\conv{(\mathcal{P})}$, as the additional binary variable $z$ creates new extreme points. Next, we introduce a new class of valid inequalities for $\mathcal{P}$ that utilizes the concept of minimal influencing set.

\subsection{Minimum influencing subset inequalities}
We use the definition of minimal influencing set from \cite{fischetti2018least}, which we include here for the reader's convenience.
\begin{definition}[\cite{fischetti2018least}]
Let $p_i \in [0,h_i-1]$ be an incentive payment to node $i \in V$ and $M \subseteq N(i)$ be a set of active neighbors of node $i \in V$, such that  $p_i + \sum_{j \in M}d_{ji} = h_i$. We say $M$ is a minimal influencing subset for node $i \in V$ if and only if for a fixed incentive payment $\overline{p}_i$, it satisfies $\overline{p}_i + \sum_{j \in M}d_{ji} = h_i$ and $\overline{p}_i + \sum_{j \in M \setminus\{k\} }d_{ji} < h_i$ for any $k \in M$. In other words, a strict subset of $M$ with the same incentive payment is not sufficient to activate node $i$. 
\label{def1}
\end{definition}

\begin{proposition}
Let $M \subseteq N$ be a minimum influencing subset with an incentive payment $p = h - \sum_{j \in M}d_j $. The minimal influencing subset inequality
\begin{align}
    x + \sum_{j \in N\setminus M} \min \{d_j, p\} y_j \geq pz \label{mis} 
\end{align}
is valid for $\mathcal{P}$.
\end{proposition}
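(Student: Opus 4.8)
The plan is to prove validity of inequality \eqref{mis} by a case analysis on the value of $z$, exactly as was done for the continuous cover and packing inequalities. First I would dispose of the case $z=0$: since $\mathcal{P}$ forces nothing on the $y_j$ when $z=0$ other than $y_j \in \{0,1\}$, and the right-hand side $pz$ becomes $0$, the left-hand side $x + \sum_{j\in N\setminus M}\min\{d_j,p\}y_j$ is a sum of nonnegative terms (recall $x\ge 0$, $p = h-\sum_{j\in M}d_j > 0$ by the minimality in Definition~\ref{def1}, and $\min\{d_j,p\}\ge 0$), hence the inequality reduces to $x+\sum_{j\in N\setminus M}\min\{d_j,p\}y_j \ge 0$, which holds trivially. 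So the entire content is in the case $z=1$.

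For $z=1$, I would fix an arbitrary feasible point $(x,y,1)\in\mathcal{P}$, so that it satisfies $x + \sum_{j\in N}d_j y_j \ge h$, and argue that it satisfies $x + \sum_{j\in N\setminus M}\min\{d_j,p\}y_j \ge p$. The natural idea is to split the knapsack constraint according to membership in $M$: writing $x + \sum_{j\in M}d_j y_j + \sum_{j\in N\setminus M}d_j y_j \ge h$ and recalling $h = p + \sum_{j\in M}d_j$, I would rearrange to obtain
\begin{align}
x + \sum_{j\in N\setminus M}d_j y_j \;\ge\; p + \sum_{j\in M}d_j(1-y_j). \notag
\end{align}
Since $y_j\le 1$ for $j\in M$, the right-hand side is at least $p$, giving the ``uncapped'' bound $x + \sum_{j\in N\setminus M}d_j y_j \ge p$. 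The remaining work is to replace each coefficient $d_j$ by the capped coefficient $\min\{d_j,p\}$ on the left without destroying validity, which is where the real argument lies.

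I expect the main obstacle to be precisely this capping step, because simply shrinking coefficients weakens the left-hand side and could in principle violate the bound. The key observation I would exploit is that $p \le h-1 < h$ and that $\min\{d_j,p\}$ only differs from $d_j$ when $d_j > p$; in that case the single item $j$ alone already contributes at least $p$, i.e. $\min\{d_j,p\}y_j = p$ whenever $y_j=1$ and $d_j>p$. Thus I would distinguish two subcases among the feasible points with $z=1$: if some $j\in N\setminus M$ with $d_j\ge p$ has $y_j=1$, then the left-hand side is already at least $p$ and we are done immediately; otherwise, every active $j\in N\setminus M$ has $d_j \le p$, so $\min\{d_j,p\}=d_j$ for all active indices and the capped left-hand side coincides with the uncapped one, reducing to the rearrangement above. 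The delicate point to verify carefully is that $x$ can absorb the shortfall: when all the ``heavy'' items are absent, the residual requirement $p - \sum_{j\in N\setminus M, d_j\le p} d_j y_j$ must be met, and I would confirm that the inequality derived from the knapsack constraint (after using $y_j\le 1$ on $M$) exactly supplies this through the continuous variable $x$. Organizing the argument as ``heavy active item present'' versus ``only light active items'' cleanly handles the $\min$ operator and completes the proof.
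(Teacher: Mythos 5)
Your proof is correct and follows essentially the same route as the paper's: split the knapsack constraint by membership in $M$, bound $\sum_{j\in M}d_jy_j\le\sum_{j\in M}d_j$ to get the residual requirement $p$, and handle the $\min\{d_j,p\}$ capping by noting that any active heavy item alone yields $p$. Your explicit ``heavy active item present'' versus ``only light active items'' case split is in fact a slightly more careful rendering of the step the paper compresses into a single displayed chain of (in)equalities.
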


\begin{proof}
If $z=0$ then inequality \eqref{mis} is trivially satisfied. If $y_j=0$ for all $j \in N\setminus M$, either $x = 0$ for $z=0$ or $x = p$ for $z=1$. Assume that none of these cases hold, given a $p > 0$, rewrite the left term of the inequality in the following form:
\begin{align}
    & x + \sum_{j \in N}d_jy_j  \notag \\ 
    = ~ & x + \sum_{j \in N\setminus M: d_j \leq p}d_jy_j + p \sum_{j \in N\setminus M: d_j > p}y_j  + \sum_{j \in M}d_j y_j \geq h, \notag
\end{align}
which implies
\begin{align}
   x + \sum_{j \in N\setminus M: d_j \leq p}d_jy_j + p \sum_{j \in N\setminus M: d_j > p}y_j  \geq h - \sum_{j \in M}d_j y_j \geq  h - \sum_{j \in M}d_j = p. \notag
\end{align}
\end{proof}

\begin{proposition}
Inequality \eqref{mis} is facet-defining for $\conv{(\mathcal{P})}$ if and only if $p>0$. Moreover, for a given $i \in V$ and a set $N(i)$, for each $M \subseteq N(i)$ such that $h_i - \sum_{j\in M}d_{ji} = p_i > 0$, the minimal influencing subset inequality
\begin{align}
    x_i + \sum_{j \in N(i)\setminus M} \min \{d_{ji}, p_i\} y_{ji} \geq p_i z_i \label{mis-LCIM}
\end{align}
is facet-defining for $\conv{(\cap_{i\in V}\mathcal{P}_i)}$. 
\end{proposition}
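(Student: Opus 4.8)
The plan is to prove the facet claim by the direct method. Since $\conv(\mathcal{P})$ is full-dimensional in $\mathbb{R}^{v+2}$, with coordinates $x$, $(y_j)_{j\in N}$ and $z$, I would exhibit $v+2$ affinely independent points of $\mathcal{P}$ that satisfy \eqref{mis} at equality; this forces the induced face to have dimension $v+1$ and hence to be a facet. Throughout I would use the defining identity $\sum_{j\in M}d_j=h-p$.

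For the sufficiency direction ($p>0$), I would split the points according to the value of $z$. In the slice $z=0$ the face equation forces $x=0$ and $y_j=0$ for every $j\in N\setminus M$, while the coordinates $y_k$, $k\in M$, stay free; this yields the $|M|+1$ points consisting of the origin together with the points having a single $y_k=1$ ($k\in M$) and all other coordinates zero, all trivially feasible since the knapsack constraint is slack when $z=0$. In the slice $z=1$ I would take the base point with $y_j=1$ for $j\in M$, $y_j=0$ for $j\in N\setminus M$ and $x=p$; substituting $\sum_{j\in M}d_j=h-p$ shows it meets the knapsack constraint with equality and lies on the face. Then, for each $j\in N\setminus M$, I would additionally switch on $y_j$ and set $x=\max\{p-d_j,0\}=p-\min\{d_j,p\}$ so that the point stays on the face; a short case check ($d_j\le p$ versus $d_j>p$) confirms feasibility for $\mathcal{P}$, giving $|N\setminus M|+1$ further points and $v+2$ in total.

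The crux is the affine independence of these $v+2$ points, which I would establish by subtracting the origin and showing the remaining $v+1$ difference vectors are linearly independent. Reading off coordinates in the right order makes this transparent: the $y_j$-coordinate for $j\in N\setminus M$ is nonzero only in the difference vector coming from the point that switches on $y_j$, which immediately kills those coefficients; the common $z=1$ level then forces the base-point coefficient to vanish; and finally the $y_k$-coordinates for $k\in M$ isolate the remaining ($z=0$) coefficients. This bookkeeping, together with the case analysis for $z=1$ feasibility, is where the main care is required, but no deep idea is needed beyond choosing $x$ to keep each point exactly on the face.

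For necessity I would observe that $p=0$ forces $\min\{d_j,p\}=0$ for all $j$, so \eqref{mis} collapses to the trivial inequality $x\ge 0$ already covered by Proposition \ref{prop:trivial}; hence \eqref{mis} contributes a genuinely new facet precisely when $p>0$. Finally, the statement for $\conv(\cap_{i\in V}\mathcal{P}_i)$ is immediate: inequality \eqref{mis-LCIM} is exactly \eqref{mis} written for node $i$ with payment $p_i>0$, so it is facet-defining for $\conv(\mathcal{P}_i)$ by the argument above, and by the lifting remark preceding the definition of $\mathcal{P}$ a facet of $\conv(\mathcal{P}_i)$ is a facet of $\conv(\cap_{i\in V}\mathcal{P}_i)$ as well.
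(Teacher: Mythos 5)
Your proof is correct and uses the same basic strategy as the paper: a direct facet proof exhibiting affinely independent points of $\mathcal{P}$ on the face, plus the observation that $p=0$ collapses \eqref{mis} to $x\geq 0$. Your point construction, however, is actually more robust than the one printed in the paper. The paper uses the points $(0,\sum_{j\in M}(e_j+e_k),1)$ for $k\notin M$, which lie on the face (and indeed in $\mathcal{P}$) only when $d_k\geq p$; your compensation $x=p-\min\{d_k,p\}$ handles the case $d_k<p$ cleanly. Likewise, the paper lists only $2+|N\setminus M|$ points, which reaches the required count $v+1$ of linearly independent points (equivalently $v+2$ affinely independent ones, once the origin is adjoined) only when $|M|=1$; your inclusion of the $z=0$ points with a single $y_k=1$ for each $k\in M$ supplies exactly the missing directions for $|M|\geq 2$. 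For the second claim, the paper re-derives the lifting explicitly by exhibiting $2n+m-1$ linearly independent points in the full space, whereas you invoke the remark stated before the definition of $\mathcal{P}$ that facets of $\conv(\mathcal{P}_i)$ remain facets of $\conv(\cap_{i\in V}\mathcal{P}_i)$; that is consistent with the paper's own framework, though an explicit construction as in the paper would make the argument self-contained.
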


\begin{proof}
Recall that $p \in [0, h-1]$ by definition. If $p= 0$, then inequality \eqref{mis} reduces to $x \geq 0$; therefore, $p>0$ is necessary. To show the sufficiency that inequality \eqref{mis} is facet-defining for $\conv{(\mathcal{P})}$,  we exhibit $v+1$ linearly independent points $(x,\vec{y},z)$ on the face defined by inequality \eqref{mis}. Let $e_j \in \mathbb{B}^v$ be the unit vector corresponding to $y_j$ for $j \in N$. Consider the two feasible points $(p, \sum_{j \in M}e_j, 1)$ and $(0, \sum_{j \in M}e_j, 0)$, then, consider the $v-1$ feasible points $(0, \sum_{j \in M}(e_j + e_k), 1)$ for $k \in L \setminus M$. It is straightforward to verify that these $v+1$ points are linearly independent and satisfy inequality \eqref{mis} at equality.  To prove the second part of this proposition, let $\eta_i \in \mathbb{B}^{2n+m}, \mu_{ij} \in \mathbb{B}^{2n+m}$, and $\zeta_i \in \mathbb{B}^{2n+m}$ for $i \in V, j \in N(i)$ be the unit vectors corresponding to variables $x_i, y_{ij}$, and $z_i$, respectively.  The component of $\eta_i$ is 1 if it has the same position with $x_i$ in the feasible solution; all other components of $\eta_i$ are 0. Similar setting is made to $\mu_{ij}$ for $y_{ij}$ and $\zeta_i$ for $z_i$, respectively. For $i \in V$, consider the $n$ points $p_i\eta_i + \sum_{j \in M}\mu_{ji} + \zeta_i$, also, consider the $n$ points $\sum_{j \in M}\mu_{ji}$. For $ i \in V$ and $k \in L \setminus M$, consider the $m-1$ points $\sum_{j \in M}(\mu_{ji} + \mu_{ki}) + \zeta_i$. These $2n+m-1$ points are linearly independent and satisfy inequality \eqref{mis-LCIM} at equality, which completes the proof. 
\end{proof}

\begin{excont}[Continued]
 \normalfont The facet-defining inequalities of \eqref{mis} for Example \ref{ex1} are listed in Table \ref{tab:ex1-c}.
\begin{table}[H]
\caption{Minimal influencing subset inequalities of Example \ref{ex1}}
\centering
\begin{tabular}{cc}
\toprule
\multicolumn{2}{c}{$x + 7y_1 + 6y_2 + 5y_3 + 4y_4 \geq 8z$} \\ \hline
set & facet-defining inequality \\ \hline
$M = \{1\}$ & $x  +y_2 + y_3 + y_4 \geq z$ \\
$M = \{2\}$ & $x +2 y_1  +2y_3 + 2y_4 \geq 2z$ \\
$M = \{3\}$ & $x +3y_1 + 3y_2 + 3y_4 \geq 3z$ \\
$M = \{4\}$ & $x +4y_1 + 4y_2 + 4y_3 \geq 4z$ \\ \bottomrule
\end{tabular}
\label{tab:ex1-c}
\end{table}
\end{excont}

Although inequalities \eqref{cci}, \eqref{cpi} and \eqref{mis} define a large number of facets for $\conv{(\mathcal{P})}$, they are not sufficient to completely describe $\conv{(\mathcal{P})}$ in its original space of variables. Particularly, the following inequality is valid and facet-defining for Example \ref{ex1} but can not be obtained through either inequalities \eqref{cci}, \eqref{cpi} or \eqref{mis}:
\begin{align*}
    x + 3y_1 + 2y_2 + 2y_3 + 2y_4 \geq 4z.
\end{align*}

\subsection{Separation of minimal influencing subset inequalities}
In this section, we give an exact polynomial time separation algorithm for finding the most violated minimal influencing subset inequality. From inequality \eqref{mis-LCIM}, we observe that finding the most violated inequality for a given fractional solution $(\vec{x^*},\vec{y^*},\vec{z^*}) \in \mathbb{R}_+^{2n+m}$ consists of choosing a set $M \subseteq N(i)$ such that $p_i z_i -  \sum_{j \in N(i)\setminus M} \min \{d_{ji},p_i\} y_{ji}$ is maximized. Let $\hat{v}:= \max \{v_i: i \in V\}$.

\begin{proposition}\label{prop:sep_M}
Given a fractional solution $(\vec{x^*},\vec{y^*},\vec{z^*}) \in \mathbb{R}_+^{2n+m}$ from solving LCIM, there exists an $O(n\hat{v}\log \hat{v})$ time separation algorithm for inequality \eqref{mis-LCIM}. 
\end{proposition}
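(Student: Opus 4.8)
The plan is to separate inequality \eqref{mis-LCIM} independently for each node $i \in V$, since the inequality decomposes over nodes and there are $n$ nodes to process. For a fixed node $i$, the separation problem is to find a subset $M \subseteq N(i)$ that maximizes the violation $p_i z_i^* - \sum_{j \in N(i)\setminus M}\min\{d_{ji}, p_i\}y_{ji}^*$, where $p_i = h_i - \sum_{j \in M}d_{ji}$ must be a positive integer in $[1, h_i-1]$. The key difficulty is that $p_i$ is itself determined by the choice of $M$ through the constraint $\sum_{j \in M}d_{ji} = h_i - p_i$, so the objective and the feasibility of $M$ are coupled through the value of $p_i$. My approach is to break this coupling by enumerating over the possible values of $p_i$.

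First I would observe that for any candidate violated inequality, the coefficient structure depends on $M$ only through the quantity $p_i$ and through the partition of $N(i)\setminus M$ into the sets $\{j: d_{ji} \le p_i\}$ and $\{j: d_{ji} > p_i\}$. The plan is to iterate over each possible positive value of $p_i$. For each fixed target value of $p_i$, the requirement $\sum_{j \in M}d_{ji} = h_i - p_i$ becomes a fixed target sum, and the objective to maximize, $p_i z_i^* - \sum_{j \in N(i)\setminus M}\min\{d_{ji},p_i\}y_{ji}^*$, rewrites as $p_i z_i^* - \sum_{j \in N(i)}\min\{d_{ji},p_i\}y_{ji}^* + \sum_{j \in M}\min\{d_{ji},p_i\}y_{ji}^*$. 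Since the first two terms are constants once $p_i$ is fixed, the separation reduces to choosing $M$ to maximize $\sum_{j \in M}\min\{d_{ji},p_i\}y_{ji}^*$ subject to $\sum_{j \in M}d_{ji} = h_i - p_i$.

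The main obstacle is that the reduced subproblem still looks like a subset-sum or knapsack-type constraint, which is in general NP-hard; so the crux of the argument is to show that the definition of a minimal influencing subset lets us avoid solving it exactly. The key insight I would exploit is that $M$ is a \emph{minimal} influencing subset, so no proper subset of $M$ achieves the same payment; combined with the fact that we want to maximize the contribution $\sum_{j \in M}\min\{d_{ji},p_i\}y_{ji}^*$, it should suffice to sort the neighbors $j \in N(i)$ by $y_{ji}^*$ (or by the ratio of $y_{ji}^*$ to weight) in non-increasing order and then greedily add neighbors until the partial sum of weights first reaches or exceeds $h_i - p_i$, i.e.\ until activation is just achieved. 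The minimality condition ensures that only the last-added element is ``tight,'' so the greedy prefix, possibly adjusted at its boundary, yields the optimal $M$ for that fixed $p_i$. I would verify that this greedy choice is optimal by an exchange argument: replacing any chosen neighbor with an unchosen one having larger $y^*$ value cannot decrease the objective while preserving the activation threshold.

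Finally I would account for the running time. Sorting the $v_i \le \hat v$ neighbors of node $i$ costs $O(\hat v \log \hat v)$, and once sorted, scanning the prefix to locate the minimal activating set costs $O(\hat v)$. Because the greedy prefix simultaneously determines the implied value of $p_i$ (there is no need to loop over all $h_i$ candidate payments separately, since each sorted prefix that first crosses the threshold pins down a unique minimal $M$ and hence a unique $p_i$), a single sorted pass over the neighbors suffices to evaluate all relevant candidate subsets for node $i$ in $O(\hat v)$ time after the sort. Summing the per-node cost $O(\hat v \log \hat v)$ over all $n$ nodes gives the claimed total of $O(n\hat v \log \hat v)$. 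The step I expect to require the most care is the exchange argument establishing that the greedy prefix is optimal and that scanning prefixes enumerates exactly the minimal influencing subsets that can yield a violated cut, rather than having to test an exponential number of subsets.
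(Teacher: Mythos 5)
Your algorithmic skeleton ends up in the same place as the paper's proof: decompose over the $n$ nodes, sort the $v_i\le\hat v$ neighbors of node $i$ by $y_{ji}^*$, scan prefixes in a single pass, and conclude $O(n\hat v\log\hat v)$. But two of your justifying steps do not hold up. First, your stopping rule is circular and, taken literally, wrong: you propose to add neighbors to $M$ ``until the partial sum of weights first reaches or exceeds $h_i-p_i$, i.e., until activation is just achieved,'' yet $p_i=h_i-\sum_{j\in M}d_{ji}$ is itself determined by $M$, and the moment $\sum_{j\in M}d_{ji}\ge h_i$ you get $p_i\le 0$ and inequality \eqref{mis-LCIM} degenerates to $x_i\ge 0$. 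The feasibility requirement is the opposite one, $\sum_{j\in M}d_{ji}<h_i$, which the paper enforces explicitly as $p_i'=h_i-\sum_{\ell=k+1}^{v}d_{j_\ell i}>0$ alongside $z_i^*-\sum_{\ell=1}^{k}y_{j_\ell i}^*>0$. Relatedly, the detour through fixing $p_i$ and solving a subset-sum instance is a red herring you never actually resolve: since \emph{every} $M$ with $\sum_{j\in M}d_{ji}<h_i$ yields a facet-defining inequality with $p_i$ read off from $M$, there is no exact-sum feasibility problem to solve, and the paper simply evaluates all $v$ sorted prefixes as candidates for $N(i)\setminus M$ and takes $\max\{0,\,p_i'(z_i^*-\sum_{\ell=1}^{k}y_{j_\ell i}^*):k\in[1,v]\}$.

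Second, the exchange argument you lean on to certify optimality of the greedy prefix would fail. Swapping a chosen neighbor for an unchosen one with larger $y^*$ changes $\sum_{j\in M}d_{ji}$ (the weights $d_{ji}$ are unequal), hence changes $p_i$, hence changes the right-hand coefficient $p_i z_i^*$ \emph{and} every left-hand coefficient $\min\{d_{ji},p_i\}$ simultaneously; the claim that such a swap ``cannot decrease the objective while preserving the activation threshold'' is not established and is not true in general. The paper does not attempt such an argument: it sidesteps the issue by exhaustively comparing the $v$ prefix candidates within the $O(\hat v)$ scan (using the lower bound $p_i'(z_i^*-\sum_{\ell\le k}y_{j_\ell i}^*)$ on the violation). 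If you want to keep your write-up, you should drop the exchange argument and the fixed-$p_i$ reduction, and instead state the prefix enumeration with the two explicit feasibility checks, which is what actually delivers the claimed running time.
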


\begin{proof}
A violated inequality \eqref{mis-LCIM} can be found if  
\begin{align}
p_i \left(z_i^*  -  \sum_{j \in N(i)\setminus M: d_{ji} > p_i}y_{ji}^* \right) - \sum_{j \in N(i)\setminus M: d_{ji} \leq p_i}d_{ji}y_{ji}^* > x_i^*, \notag
\end{align}
which implies that it suffices to consider $y_{ji}^*$ for some $j \in N(i)$ such that $z_i^* - \sum_{j \in N(i)}y_{ji}^* > 0$ and $p_i > 0$. To do so, we sort $y_{ji}^*$ in non-decreasing order for $j \in N(i)$ with indices $j_1, j_2, \ldots, j_v$ such that $y_{j_1 i}^* \leq y_{j_2 i}^* \leq \ldots \leq y_{j_v i}^*$. For $j_1\leq j_k \leq j_v$, we sum up first $k$ elements, then we check if $z_i^* - \sum_{\ell=1}^k y_{j_{\ell}i}^* >0$ and $ p_i^\prime = h_i - \sum_{\ell=k+1}^v d_{j_{\ell}i} > 0$,  until $z_i^* - \sum_{\ell=1}^{k+1} y_{j_{\ell}i}^* <0$. These $k$ elements constitute the subset $M$ and $N(i)\setminus M$ simultaneously and ensure $z_i^* - \sum_{j \in N(i)\setminus M}y_{ji}^* > 0$ and $p_i > 0$ in order to generate a violated cut. The set $M$ corresponds to the most violated cut can be determined by evaluating $\max\{0, p_i^\prime(z_i^* - \sum_{\ell=1}^k y_{j_{\ell}i}^*): k \in [1,v]\}$. If $\max\{0, p_i^\prime(z_i^* - \sum_{\ell=1}^k y_{j_{\ell}i}^*): k \in [1,v]\} = 0$, then there are no violated cuts. The sorting process runs in $O(\hat{v} \log \hat{v})$ time and the evaluation takes  $O(\hat{v})$, since we have to check for every node $i \in V$, overall the separation algorithm runs in  $O(n\hat{v}\log \hat{v})$ time.   
\end{proof}

\subsection{Separation of continuous cover and continuous packing inequalities}
Up to this point, we presented an exact polynomial time separation algorithm for inequalities \eqref{mis-LCIM}. Next, we show that a violated continuous cover inequality for $\conv{(\cap_{i\in V}\mathcal{P}_i)}$ can be identified by exploiting the result of Proposition \ref{prop:sep_M}. Then we can obtain a violated continuous packing inequality in polynomial time after finding a violated continuous cover inequality. 
We first formally establish the relationship between $S$, $L$ and $M$ in the following lemma. 

\begin{lemma} \label{lemma1}
If $p = h - \sum_{j \in M}d_j > 0$ and there exists $k \in N\setminus M$ such that $\sum_{j \in M \cup \{k\}}d_j > h$ and $\sum_{j \in M \cup \{k\}\setminus \{\ell\}}d_j < h$ for any $\ell \in M$ , then $p = \pi$, $S = N \setminus M$, $\sum_{j \in M \cup \{k\}}d_j - h = \lambda$ and $L = M\cup\{k\}$.
\end{lemma}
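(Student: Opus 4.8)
The plan is to verify each of the four claimed identities directly from the three hypotheses: that $M$ is a set with $p = h - \sum_{j\in M}d_j > 0$, that some $k \in N \setminus M$ makes $M \cup \{k\}$ ``just barely exceed'' the threshold in the sense that $\sum_{j \in M \cup \{k\}}d_j > h$, and that $M \cup \{k\}$ satisfies the minimality condition $\sum_{j \in M \cup \{k\}\setminus\{\ell\}}d_j < h$ for every $\ell \in M$. The conclusions to establish are $p = \pi$, $S = N \setminus M$, $\lambda = \sum_{j \in M \cup \{k\}}d_j - h$, and $L = M \cup \{k\}$. Since $\pi$, $S$, $\lambda$, and $L$ are \emph{defined} by the continuous-cover and continuous-packing constructions earlier in the excerpt, each identity is really a matter of checking that the hypotheses force the relevant defining conditions to hold.

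\medskip

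\noindent First I would unpack the continuous-packing claims, since they are the most immediate. By definition a continuous packing $L$ satisfies $\sum_{j \in L}d_j - h = \lambda > 0$ together with $\sum_{j \in L \setminus \{k\}}d_j - h < 0$ for each element removed. Setting $L := M \cup \{k\}$, the first hypothesis on $k$ gives $\sum_{j \in M \cup \{k\}}d_j - h > 0$, which is exactly $\lambda > 0$; and the minimality hypothesis handles removal of any $\ell \in M$. The one remaining check is the removal of $k$ itself: $\sum_{j \in M}d_j - h = -p < 0$, which holds precisely because $p > 0$. Thus $M \cup \{k\}$ meets every defining condition of a continuous packing, yielding $L = M \cup \{k\}$ and $\lambda = \sum_{j \in M \cup \{k\}}d_j - h$.

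\medskip

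\noindent Next I would treat the continuous-cover side. A continuous cover $S$ requires $h + \sum_{j \in S}d_j - \sum_{j \in N}d_j = \pi > 0$ with the analogous per-element minimality $h + \sum_{j \in S \setminus \{k\}}d_j - \sum_{j \in N}d_j < 0$. The key algebraic observation is the complementation identity: taking $S = N \setminus M$, one computes $h + \sum_{j \in N\setminus M}d_j - \sum_{j \in N}d_j = h - \sum_{j \in M}d_j = p$. This single calculation simultaneously shows $\pi = p$ and, combined with $p > 0$, that $\pi > 0$. The subtle point — and the step I expect to be the main obstacle — is the minimality condition for $S = N \setminus M$, because removing an element from $S$ corresponds to \emph{adding} that element to $M$, so the cover-minimality condition must be translated into a statement about $M$. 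Specifically, removing $j \in S$ gives $h + \sum_{i \in S\setminus\{j\}}d_i - \sum_{i \in N}d_i = p - d_j$, and this must be negative for every $j \in S = N \setminus M$. This is not one of the stated hypotheses verbatim; it would need to follow from the minimal-influencing-subset structure, namely that $p = h - \sum_{j\in M}d_j$ is the residual capacity and that each $d_j$ for $j \notin M$ is large enough ($d_j \geq p$, equivalently $d_j > p - 1$ under integrality) that adding it over-fills the knapsack.

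\medskip

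\noindent I would therefore resolve this obstacle by invoking the minimality of $M \cup \{k\}$ as a minimal influencing subset (Definition~\ref{def1}) together with the integrality assumption on the $d_j$ and the standing non-triviality assumptions from the problem setup, arguing that every $j \in N \setminus M$ satisfies $d_j \geq p$, which forces $p - d_j \leq 0$; combined with integrality and $p > 0$ this gives the strict inequality $p - d_j < 0$ needed for cover-minimality. Once that is in place, all four identities follow, and I would close by remarking that the lemma thus exhibits $M$, $S = N \setminus M$, and $L = M \cup \{k\}$ as three descriptions of the same underlying structure, with $\pi = p$ and $\lambda + p = d_k$, which is the bridge that will let the separation routine for the minimal influencing subset inequalities be reused to separate the continuous cover and packing inequalities.
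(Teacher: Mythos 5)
Your overall route is the same as the paper's: identify $S$ with $N\setminus M$ via the complementation identity $h+\sum_{j\in N\setminus M}d_j-\sum_{j\in N}d_j=h-\sum_{j\in M}d_j=p$, and identify $L$ with $M\cup\{k\}$ by checking the defining conditions of a continuous packing (your explicit check that removing $k$ itself gives $\sum_{j\in M}d_j-h=-p<0$ is a detail the paper leaves implicit; that half of your argument is complete and correct). The gap is exactly where you predicted it: the cover-minimality condition $p-d_j<0$ for every $j\in N\setminus M$. Your proposed resolution --- that minimality of $M\cup\{k\}$ as an influencing subset, plus integrality, forces $d_j\ge p$ for every $j\in N\setminus M$ --- does not hold. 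The minimality in Definition~\ref{def1}, and likewise the hypothesis $\sum_{j\in M\cup\{k\}\setminus\{\ell\}}d_j<h$ for $\ell\in M$, only constrain what happens when you \emph{remove} elements of $M\cup\{k\}$; they place no lower bound on the weights of elements of $N\setminus M$ other than $k$. Concretely, take $N=\{1,2,3\}$, $d=(5,5,1)$, $h=8$, $M=\{1\}$, $k=2$: all hypotheses of the lemma hold with $p=3$, yet $d_3=1<p$, so $N\setminus M=\{2,3\}$ violates the cover-minimality requirement when element $3$ is dropped. The step therefore cannot be repaired by the argument you sketch.

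It is worth noting that the paper's own proof does not address this point either: it matches only the equation $\pi=h+\sum_{j\in S}d_j-\sum_{j\in N}d_j$ and declares $S=N\setminus M$ ``by definition,'' silently passing over the per-element minimality clause in the definition of a continuous cover. So you have correctly located a genuine weak point of the lemma as stated; but your fix, as written, is not a proof. An honest treatment would either add the hypothesis $d_j>p$ for all $j\in N\setminus M$, or weaken the conclusion to the identities $\pi=p$ and $\lambda=d_k-p$ for the sets $N\setminus M$ and $M\cup\{k\}$ without claiming that $N\setminus M$ satisfies the minimality requirement of a continuous cover.
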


\begin{proof}
By rearranging the terms in the definition of $p$,
\begin{align}
 p & =  h - \sum_{j \in M}d_j > 0 \notag \\
   & =  h + \sum_{j \in N\setminus M}d_j - \sum_{j \in N}d_j > 0 \notag \\ 
   & =  h + \sum_{j \in S}d_j - \sum_{j \in N}d_j > 0.  \notag
\end{align}
Hence, we can see that $S$ is equivalent to $N\setminus M$ and $p=\pi$ if $p>0$. Next, if there exists an element $k \in N\setminus M$ such that $\sum_{j \in M \cup \{k\}}d_j > h$,  immediately we have $M \cup \{k\} = L$ by definition and $\sum_{j \in M \cup \{k\}}d_j - h = \lambda$. 
Since $N\setminus M \cup \{M\cup\{k\}\} = N$, we have $S \cup L = N$, $S \cap L = \{k\}$, and  $L\setminus \{k\} = N\setminus S = M$. 
\end{proof}

Following Lemma \ref{lemma1}, we present an efficient separation procedure to determine violated continuous cover and continuous packing inequalities by using the information of the set $M$. Note that here we add an index $i$ to inequalities \eqref{cci} similar to \eqref{mis-LCIM} for all $i \in V$. 

\begin{proposition}
There exists a violated continuous cover inequality if a violated inequality \eqref{mis-LCIM} is identified. 
\end{proposition}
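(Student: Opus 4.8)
The plan is to turn the minimal influencing subset $M$ behind the violated inequality \eqref{mis-LCIM} into a continuous cover through Lemma \ref{lemma1}, and then show that the resulting continuous cover inequality \eqref{cci} is cut off by the very same fractional point $(\vec{x}^*,\vec{y}^*,\vec{z}^*)$. Since the violated \eqref{mis-LCIM} has $p=h-\sum_{j\in M}d_j>0$, the first part of Lemma \ref{lemma1} lets me put $S:=N\setminus M$ and $\pi=p$, so that $S$ plays the role of the cover in \eqref{cci} with exactly the same $\pi$. The guiding observation is that the two inequalities carry identical coefficients $\min\{\pi,d_j\}=\min\{p,d_j\}$ on every $j\in S$, and \eqref{cci} differs from \eqref{mis-LCIM} only by the extra block $\sum_{j\in M}\Phi(d_j)y_j$ on the left and the constant $\sum_{j\in M}\Phi(d_j)$ on the right.

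Concretely, writing $\sigma_{\mathrm{mis}}$ and $\sigma_{\mathrm{cov}}$ for the left-minus-right slacks of \eqref{mis-LCIM} and \eqref{cci} evaluated at $(\vec{x}^*,\vec{y}^*,\vec{z}^*)$, and using that for a genuine continuous cover every $d_j>\pi$ forces the cover's constant $\min_{j\in S}\{\pi,d_j\}$ to equal $\pi=p$, I would derive
\[
\sigma_{\mathrm{cov}}=\sigma_{\mathrm{mis}}+\sum_{j\in M}\Phi(d_j)\,\bigl(y_j^*-z^*\bigr).
\]
The point $(\vec{x}^*,\vec{y}^*,\vec{z}^*)$ is produced by the LP relaxation of LCIM and therefore satisfies \eqref{LCIM2}, which yields $y_j^*=y_{ji}^*\le z_i^*=z^*$; together with $\Phi(d_j)\ge 0$ (immediate from \eqref{phiS}) this makes the added sum nonpositive. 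Hence $\sigma_{\mathrm{cov}}\le\sigma_{\mathrm{mis}}<0$, so \eqref{cci} is violated --- in fact by at least as much as \eqref{mis-LCIM}, with the surplus $\sum_{j\in M}\Phi(d_j)(z^*-y_j^*)$ measuring the additional cut depth gained from the lifted terms on $M$.

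The step I expect to be the real obstacle is justifying that $S=N\setminus M$ is a bona fide (minimal) continuous cover, i.e.\ that $d_j>\pi=p$ holds for every $j\in S$; this is exactly what collapses the coefficients on $S$ and the constant $\min_{j\in S}\{\pi,d_j\}$ to $\pi=p$ and makes the slack identity above exact. Minimality amounts to $\sum_{j\in M\cup\{k\}}d_j>h$ for all $k\in S$, whereas Lemma \ref{lemma1} only supplies one such $k$, so an index $j\in S$ with $d_j\le p$ need not be excluded automatically. I would close this gap by appealing to the greedy construction of $M$ in Proposition \ref{prop:sep_M} and reducing to a minimal sub-cover: any offending $j\in S$ is absorbed into $M$, lowering $p$ to $p-d_j$. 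The care needed here is that such a move can in principle shrink the slack, so the reduction must be carried out so as to preserve both $p>0$ and the strict violation at $(\vec{x}^*,\vec{y}^*,\vec{z}^*)$; verifying that the $y^*$-sorted set returned by Proposition \ref{prop:sep_M} already meets (or can be trimmed to meet) the minimality requirement is the crux, after which the slack identity delivers the conclusion routinely.
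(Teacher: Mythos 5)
Your argument is essentially the paper's own proof: both invoke Lemma \ref{lemma1} to set $S=N\setminus M$ and $\pi=p$, and both conclude that a violated \eqref{mis-LCIM} yields a violated \eqref{cci} because the only difference between the two slacks is the extra lifted block $\sum_{j\in N(i)\setminus S}\Phi(d_{ji})(z_i^*-y_{ji}^*)$, which is nonnegative since $\Phi\geq 0$ and $y_{ji}^*\leq z_i^*$. The obstacle you single out --- that $d_{ji}>\pi_i$ must hold for every $j\in S$ so that the cover is minimal and the constant $\min_{j\in S}\{\pi_i,d_{ji}\}$ collapses to $\pi_i$ --- is genuine, but the paper does not close it either; it simply states ``Suppose $d_{ji}>\pi_i$ for all $j\in S$,'' so your slack identity together with that hypothesis reproduces the published proof exactly, and your proposed trimming of $S$ goes beyond what the paper actually establishes.
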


\begin{proof}
Recall that inequality \eqref{mis-LCIM} is violated if 
\begin{align}
p_i \left(z_i^*  -  \sum_{j \in N(i)\setminus M: d_{ji} > p_i}y_{ji}^* \right) - \sum_{j \in N(i)\setminus M: d_{ji} \leq p_i}d_{ji} y_{ji}^* > x_i^*, \notag
\end{align}
or equivalently by Lemma \ref{lemma1}, 
\begin{align}
\pi_i z_i^*  - \pi_i \sum_{j \in S: d_{ji} > \pi_i}y_{ji}^*  - \sum_{j \in S: d_{ji} \leq \pi_i}d_{ji} y_{ji}^* > x_i^*. \notag  
\end{align}
Now, a continuous cover inequality for a fixed node $i \in V$ is violated if
\begin{align}
\min\limits_{j \in S}\{\pi_i, d_{ji}\}z_i^* + \sum_{j \in N(i) \setminus S}\Phi(d_{ji})(z_i^*-y_{ji}^*) -  \sum_{j \in S} \min\{\pi_i, d_{ji}\}y_{ji}^* > x_i^*. \notag
\end{align}
Suppose $d_{ji} > \pi_i $ for all $j \in S$, then the left term of the continuous cover inequality can be further written as
\begin{align}
\pi_i z_i^* + \sum_{j \in N(i) \setminus S}\Phi(d_{ji})(z_i^*-y_{ji}^*) - \pi_i \sum_{j \in S: d_{ji} > \pi_i}y_{ji}^*  - \sum_{j \in S: d_{ji} \leq \pi_i}d_{ji} y_{ji}^*. \notag
\end{align}
Since $(z_i^*-y_{ji}^*) \geq 0$ holds and the lifting function $\Phi$ is nonnegative, the rest of the terms already violate the current solution $(\vec{x^*},\vec{y^*},\vec{z^*})$, we then obtain a violated continuous cover inequality with $\pi_i$ being the minimum among $\min\{\pi_i, d_{ji}: j \in S\}$. This suggests that, when a violated inequality \eqref{mis-LCIM} is found, it suffices to generate a violated continuous cover inequality concurrently. 
\end{proof}

On the other hand, a violated continuous packing inequality can not be obtained directly from separating inequality \eqref{mis-LCIM}. However, when a violated continuous cover inequality is found, we can check every element in $S$ to see if there exists an element $k$ such that $\sum_{j \in N\setminus S \cup \{k\}}d_{ji} > h_i$. For every $k$ satisfying this condition, the packing set $L$ is then determined. A violated continuous packing inequality is found if 
\begin{align}
    \sum_{j\in L}\max\{0,d_{ji}-\lambda_i\}(z_i^*-y_{ji}^*) - \sum_{j\in N(i)\setminus L}\Psi(d_{ji})y_{ji}^* > x^* \notag
\end{align}
holds. Furthermore, suppose $\hat{S} = \max\{|S|: S \subseteq N(i), i \in V \}$, the process of checking elements in $S$ takes $O(\hat{S})$ time  and the function $\Psi$ can be constructed in  $O(\hat{v}\log \hat{v})$ time using binary search proposed in \cite{narisetty2011lifted}.

\section{Valid inequalities for LCIM with cycles} \label{sec:3}
In this section, we expand the study of $\conv{(\cap_{i\in V} \mathcal{P}_i)}$ to incorporate the remaining constraints in LCIM on an arbitrary bidirectional graph that contains cycles. The polyhedron that describes the intersection of these constraints is
\begin{align}
     \mathcal{Q} = \left\{(x,y,z) \in \mathbb{R}_+^n \times \mathbb{B}^{n+m}: \eqref{LCIM1} - \eqref{LCIM4} \right\}.  \notag
\end{align}

To simplify the notation, we let $\boldsymbol\alpha$ and $\boldsymbol\beta$ be the coefficients associated with variables $\vec{y}$ and $\vec{z}$ corresponding to continuous cover and continuous packing inequalities. In other words, we express them in the following form:
\begin{align}
    x_i + \sum_{j \in N(i)}\alpha_{ji} y_{ji} \geq \beta_i z_i, \label{ucbase}
\end{align}
and it is facet-defining for $\conv{(\mathcal{Q})}$. Due to the straightforward structure of a cycle, the minimum incentive of the influence propagation can be easily characterized. Raghavan and Zhang \cite{raghavan2021weighted} give an $O(n)$ time algorithm for the weighted target set selection problem on a cycle. We also present a polynomial time dynamic programming to solve LCIM on a simple cycle. Then we give a class of exponential number of valid inequalities that forms acyclic influence propagation by exploiting inequality \eqref{ucbase} as the base inequality. We also demonstrate that the separation for this class of valid inequalities can be done in polynomial time for arbitrary bidirectional graphs that contain cycles. 

\subsection{Dynamic programming recursion for LCIM on a simple cycle} \label{sec:dp}
Without loss of generality, we assume $|V(C)|=|V|=n$ in this section. Hence, for a smple cycle graph, we have $V=V(C)=n$, $E = C$ and $|E|=2n$ with $v_i =2 $ for all $i \in V$. We still use $V(C)$ for the set of all nodes and $C$ for set of all bidirectional arcs to focus on the discussion of LCIM on a simple cycle for consistency. Observe that due to the cycle structure, the influence propagation occurs on an induced path of a cycle as it is one-way and consecutive after a particular node is activated by paying full incentive to it. Given the cardinality requirement $b \leq n-1$, the cost of activating other nodes on this path is equivalent to the threshold of the inactivated node minus the influence weights exerted from the activated predecessor on the side. For $b=n$, the cost of activating the last node is zero as it receives influence exertion from both its predecessor and the firstly activated node. Therefore, we only need to evaluate the cases for $b\leq n-1$. 

\begin{figure}[H]
    \centering
    \includegraphics[scale=0.4]{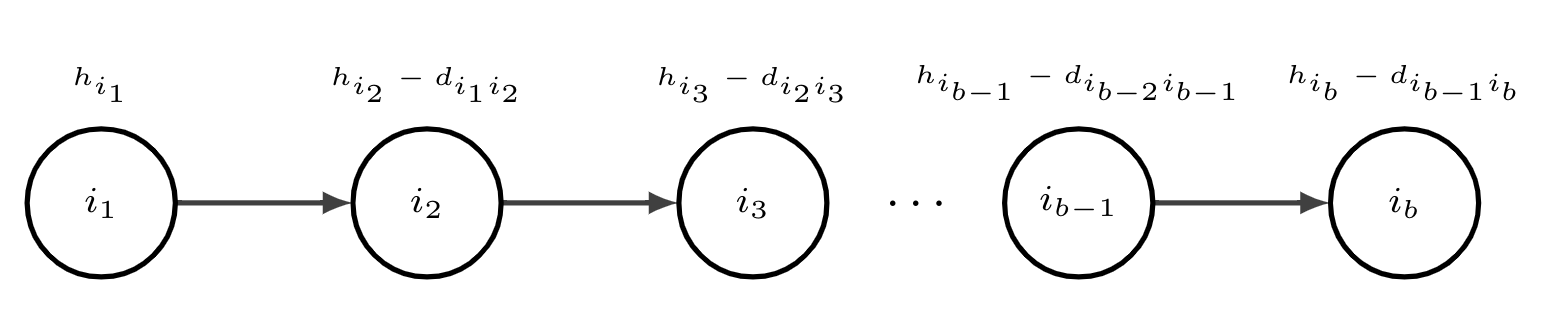}
    \caption{An induced subgraph of a cycle is an one-way path}
    \label{fig:DP}
\end{figure}
Given $b \leq n-1$, for $i \in V(C)$, construct a node set $\overrightarrow{V}_{ib}$ that contains node $i$ with forward arcs connecting nodes starting from $i$ with total number of nodes equals to $b$. Let $i_1, i_2, \ldots, i_{b-1}, i_b$ be the indices in $\overrightarrow{V}_{ib}$. The corresponding set of forward arcs is then $\overrightarrow{C}_{ib} = \{(i_j, i_{j+1}): j \in [1,b-1]\}$ as illustrated in Figure \ref{fig:DP}. We apply the similar logic for backward set of nodes  $\overleftarrow{V}_{ib}$  and arcs $\overleftarrow{C}_{ib}$. Let $F(i,b)$ denote the minimum cost of activating $b$ nodes on the cycle beginning with node $i$. For $1 \leq b \leq n-1$, the minimum cost of activating $b$ nodes on a cycle is given by
\begin{align}
    \min\limits_{i \in V(C)} F(i,b) \label{dp1}
\end{align}
where
\begin{align}
    F(i,b) = \min \left\{ h_i + \sum_{(j,k) \in \overrightarrow{C}_{ib}}(h_k - d_{jk}) , \quad  h_i + \sum_{(j,k) \in \overleftarrow{C}_{ib}}(h_k - d_{jk})\right\}. \label{dp2}
\end{align}

\begin{proposition}
The dynamic programming recursion given by \eqref{dp1} and \eqref{dp2} solves LCIM on a simple cycle in $O(n)$ time. 
\end{proposition}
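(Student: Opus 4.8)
The plan is to prove the statement in two logically independent parts: the \emph{correctness} of the value returned by the recursion \eqref{dp1}--\eqref{dp2}, and its $O(n)$ running time. Throughout I restrict to $1\le b\le n-1$, since the excerpt already disposes of $b=n$ separately. I would first establish a structural description of an optimal solution and then argue that the recursion enumerates exactly the candidate costs of such solutions.

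For the structural lemma I would show that there is an optimal solution whose active set $\{i:z_i=1\}$ is a single contiguous arc of the cycle and in which exactly one active node receives a full incentive. The starting observation is that on a cycle every node has $v_i=2$, so along any induced path of active nodes the propagation digraph is an orientation of a path; together with the acyclicity forced by \eqref{LCIM4}, each maximal arc of active nodes must contain at least one \emph{source} (a node receiving no influence from an active neighbour), and by \eqref{LCIM1} each source must be paid its full threshold, while every non-source active node $k$ draws influence $d_{jk}$ from its unique active predecessor $j$ and hence needs incentive only $h_k-d_{jk}\ge 0$ (nonnegativity follows from the model assumption $\max\{d_{ji}:j\in N(i)\}\le h_i$). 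I would then take an optimal solution minimizing the number of sources and argue by an exchange step that this number is $1$: two disjoint active arcs each require their own full-incentive source, and rerouting to a single arc of the same length $b$ replaces a full payment $h_k$ by propagation payments $h_k-d_{jk}<h_k$, so the count of sources can always be driven down to one, which forces the active set to be a single arc served from a single source.

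The main obstacle is the reduction from the resulting cost expression to the two terms in \eqref{dp2}. Writing $W(m)$ for the total influence weight routed away from a source placed at the $m$-th node of a fixed arc $A$, the minimum cost of activating $A$ equals $\sum_{k\in A}h_k-\max_m W(m)$, and $F(i,b)$ as defined evaluates only the two \emph{extreme} placements, namely $m=1$ (forward propagation along $\overrightarrow{C}_{ib}$) and $m=b$ (backward propagation along $\overleftarrow{C}_{ib}$). The delicate step, and the one I expect to require the most care, is to justify that sweeping the start node $i$ over all of $V(C)$ together with both directions already attains the arc-optimal cost $\sum_{k\in A}h_k-\max_m W(m)$ for the minimizing arc; equivalently, that placing the single source at an \emph{endpoint} of the chosen arc is optimal. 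This is immediate when the per-edge influence is symmetric, since then $W(m)$ is constant in $m$; in the asymmetric case it is precisely where one must rule out that an interior source with two-sided spread beats every endpoint-seeded window swept by $\min_{i\in V(C)}F(i,b)$, and I would try to close this gap using the structural weight inequalities $\max\{d_{ji}:j\in N(i)\}\le h_i$ and $\sum_{j\in N(i)}d_{ji}>h_i$ imposed by the model, treating the endpoint-optimality claim as the crux of the correctness argument.

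Finally, for the running time I would compute all values $F(i,b)$ by a sliding window over the fixed cyclic order. Define the increments $c^{\rightarrow}_k=h_{k+1}-d_{k,k+1}$ and $c^{\leftarrow}_k=h_{k-1}-d_{k,k-1}$; then the forward value at $i$ is $h_i$ plus the sum of the $b-1$ consecutive increments $c^{\rightarrow}_i,\dots,c^{\rightarrow}_{i+b-2}$, and symmetrically for the backward value. Precomputing prefix sums of $h$, of $c^{\rightarrow}$ and of $c^{\leftarrow}$ around the cycle in $O(n)$ lets each window sum, hence each $F(i,b)$, be read off in $O(1)$, and advancing $i$ to $i+1$ updates each window in $O(1)$ by dropping one increment and adding one. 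Taking the outer minimum in \eqref{dp1} over the $n$ start nodes is then $O(n)$, which yields the claimed $O(n)$ bound for the fixed value $b=\lceil an\rceil$.
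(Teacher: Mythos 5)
Your running-time analysis is fine, and in fact more complete than the paper's: the paper never explains how all $n$ values $F(i,b)$ are obtained in $O(n)$ total rather than $O(nb)$, and your prefix-sum/sliding-window argument supplies exactly the missing step. The problem is the correctness half, and specifically the step you yourself single out as the crux: that the single fully-paid source may be assumed to sit at an \emph{endpoint} of the active arc. You leave this open and hope that $\max\{d_{ji}:j\in N(i)\}\le h_i$ and $\sum_{j\in N(i)}d_{ji}>h_i$ will close it, but they cannot, because the claim is false for asymmetric weights. Take a window $a_1,a_2,a_3$ of a cycle with $h_{a_1}=h_{a_2}=h_{a_3}=10$, $d_{a_2a_1}=d_{a_2a_3}=10$, $d_{a_1a_2}=d_{a_3a_2}=6$; the remaining cycle data can be chosen so that every stated assumption holds. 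Paying $a_2$ in full and letting it influence both neighbours costs $10+0+0=10$ and is feasible for $\mathcal{Q}$ (the propagation digraph consists of two arcs leaving $a_2$, which creates no cycle and violates no constraint), whereas the two endpoint-seeded orientations of the same window each cost $10+(10-6)+(10-10)=14$. So an interior source with two-sided spread can strictly beat every quantity that \eqref{dp2} evaluates, and the endpoint-optimality lemma you need is not merely unproven but unavailable. Your structural picture (one contiguous arc, one source) is right; the recursion simply does not enumerate all placements of that source, only the two extreme ones.

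For comparison, the paper's own proof does not engage with this at all: the preamble to Section \ref{sec:dp} asserts that propagation on a cycle is ``one-way and consecutive'' once the seed is paid in full, and the proof of the proposition merely restates that \eqref{dp2} evaluates both directions from each start node. You have therefore correctly located the weak point, but a correct proof must either restrict the claim to symmetric influence weights, where, as you observe, the saved weight $W(m)$ is independent of the source position $m$ and endpoint seeding loses nothing, or enlarge the recursion to minimize over all source positions inside each window; the latter is still computable in linear time overall, since the best interior position maximizes a prefix sum of the differences between backward and forward edge weights over the window. Two smaller points: your contiguity exchange is looser than you present it, because merging two disjoint active arcs into one arc of length $b$ changes which nodes are activated and hence the threshold sum, so ``replacing $h_k$ by $h_k-d_{jk}$'' is not a like-for-like comparison and needs its own argument; and the nonnegativity $h_k-d_{jk}\ge 0$ that you invoke is indeed guaranteed by the paper's preprocessing assumption, so that part is sound.
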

\begin{proof}
The recursion \eqref{dp2} evaluates the minimum cost of activating $b$ nodes on a path beginning with node $i$ on both directions. As we only need to compare the minimum of \eqref{dp2} for every node, we obtain the optimal objective function of LCIM on a simple cycle in $O(n)$ time. 
\end{proof}

\subsection{Valid inequalities for influence propagation over a cycle}
Since every network consists of trees and cycles as substructures, observe that for every cycle in the network, at least one node is either paid with full incentive or the activation requires influence exertion from nodes outside the cycle. Fischetti et al. \cite{fischetti2018least} first recognized this observation and proposed a generalized propagation constraints in a different space of variables. Here we propose an exponential class of valid inequalities that captures this observation as well as ensures the influence propagation is acyclic for $\conv{(\mathcal{Q})}$. 

\begin{proposition} \label{prop:UC}
Given an inequality \eqref{ucbase} and a cycle with set of nodes $V(C)$ and set of arcs $C$, for $U \subseteq V(C)$, the $(U,C)$ inequality
\begin{align}
    \sum_{i \in U} \gamma_i \left(x_i + \sum_{j \in N(i)} \alpha_{ji} y_{ji} - \beta_i z_i \right) \geq \delta(U) \left(1 - \sum_{(k,\ell) \in C: \ell \notin U}(z_{\ell}-y_{k\ell}) \right)  \label{ucinequ}
\end{align}
is valid  for $\mathcal{Q}$, where 
\begin{itemize}
    \item[(i)] $\omega_i = h_i - \beta_i + \sum_{j \in N(i): j \notin V(C) }(\alpha_{ji} - d_{ji})$,
    \item[(ii)] $\delta(U) = \omega_i$ if $|U| = 1$,
    \item[(iii)] $\delta(U)$ computes the least common multiple of $\omega_i$ for $i \in U$ if $|U| \geq 2$,
    \item[(iv)] $\gamma_i = \frac{\delta(U)}{\omega_i}$.
\end{itemize}
\end{proposition}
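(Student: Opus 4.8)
The plan is to prove validity of \eqref{ucinequ} by a case analysis driven by the sign of its right-hand side, exploiting that \eqref{ucbase} renders the left-hand side a nonnegative combination of base-inequality slacks.

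First I would record two elementary facts. Since \eqref{ucbase} is valid for $\conv(\mathcal{Q})$ and each $\gamma_i=\delta(U)/\omega_i>0$, the left-hand side of \eqref{ucinequ} is a sum of nonnegative terms, hence nonnegative at every feasible point. Next, for each cycle arc $(k,\ell)\in C$ the coupling constraints \eqref{LCIM2} give $y_{k\ell}\le z_\ell$, so $z_\ell-y_{k\ell}\in\{0,1\}$ and the parenthesized factor on the right is $1$ minus a nonnegative integer. If $\sum_{(k,\ell)\in C:\,\ell\notin U}(z_\ell-y_{k\ell})\ge 1$, the right-hand side is $\le 0$ and validity is immediate from left-hand-side nonnegativity. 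The entire content therefore lies in the complementary case, where the sum equals $0$ and we must show the left-hand side is at least $\delta(U)$.

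So assume $z_\ell=y_{k\ell}$ for every arc $(k,\ell)\in C$ with $\ell\notin U$. Combining this with $y_{k\ell}\le z_k$ from \eqref{LCIM2} yields $z_\ell\le z_k$ along every such arc, so activation does not increase as one walks $C$. I would then invoke the generalized cycle-elimination constraints \eqref{LCIM4}: were every node of the cycle active with $y_{k\ell}=1$ on all of $C$, we would have $\sum_{(k,\ell)\in C}y_{k\ell}=|V(C)|$, contradicting \eqref{LCIM4}. Hence influence cannot close up around $C$, and there is an arc $(k^{*},i^{*})\in C$ with $z_{i^{*}}=1$ and $y_{k^{*}i^{*}}=0$; because $z_\ell=y_{k\ell}$ off $U$, such a ``break'' node $i^{*}$ must lie in $U$. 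I would sharpen the selection so that $i^{*}$ is a genuine source of the cycle, receiving no influence from either cycle neighbour (both the incoming arc of $C$ and the reverse of its outgoing arc carry $y=0$); the same counting argument, applied to the active portion of the cycle, guarantees such a source exists and, by the off-$U$ identity, sits in $U$.

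Finally I would bound the slack of \eqref{ucbase} at this source. Using $z_{i^{*}}=1$ and the vanishing of both cycle-neighbour influence variables, I rewrite
\[
x_{i^{*}}+\sum_{j\in N(i^{*})}\alpha_{ji^{*}}y_{ji^{*}}-\beta_{i^{*}}
=\Big(x_{i^{*}}+\sum_{j\in N(i^{*})}d_{ji^{*}}y_{ji^{*}}\Big)+\sum_{j\in N(i^{*})}(\alpha_{ji^{*}}-d_{ji^{*}})y_{ji^{*}}-\beta_{i^{*}}.
\]
The first bracket is $\ge h_{i^{*}}$ by \eqref{LCIM1}; among the correction terms the cycle-neighbour ones vanish (their $y$ equal $0$), while for $j\notin V(C)$ the coefficient relation $\alpha_{ji^{*}}\le d_{ji^{*}}$ (valid for the continuous cover and packing coefficients) together with $y_{ji^{*}}\le 1$ gives $(\alpha_{ji^{*}}-d_{ji^{*}})y_{ji^{*}}\ge \alpha_{ji^{*}}-d_{ji^{*}}$. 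Summing, the slack is at least $h_{i^{*}}-\beta_{i^{*}}+\sum_{j\in N(i^{*}):\,j\notin V(C)}(\alpha_{ji^{*}}-d_{ji^{*}})=\omega_{i^{*}}$. Multiplying by $\gamma_{i^{*}}=\delta(U)/\omega_{i^{*}}$ produces a single term $\ge\delta(U)$, and since every other summand of the left-hand side is nonnegative, \eqref{ucinequ} follows. The main obstacle is the middle step: converting the acyclicity encoded in \eqref{LCIM4} into the existence of a cycle source that both lies in $U$ and receives no influence from either neighbour, since only then do the cycle-neighbour correction terms drop out and the slack bound collapse exactly to $\omega_{i^{*}}$. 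Dealing with cycles that are only partially activated, and confirming the coefficient signs $\alpha_{ji}\le d_{ji}$ and $\beta_i\le h_i$ (hence $\omega_i>0$ and $\gamma_i>0$) that make this reduction exact, is where the care is required.
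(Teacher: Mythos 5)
Your architecture is genuinely different from the paper's, and in one respect better. The paper's proof only verifies the inequality on two extremal activation patterns ($z_i=0$ on $U$ with $z_i=1$ on $V(C)\setminus U$, and the reverse), whereas your split on the sign of the right-hand side is exhaustive: when $\sum_{(k,\ell)\in C:\,\ell\notin U}(z_\ell-y_{k\ell})\ge 1$ the right-hand side is nonpositive and nonnegativity of the left-hand side (each summand being a $\gamma_i$-scaled slack of the valid base inequality \eqref{ucbase}) finishes the case, and otherwise one must produce a single slack worth $\omega_{i^*}$. Your computation of that slack --- peeling off \eqref{LCIM1} and using $\alpha_{ji}\le d_{ji}$, $y_{ji}\le 1$ for neighbours outside $V(C)$ --- correctly explains where $\omega_i$ comes from and is more explicit than the paper's ``at least one node is activated with full incentive'' argument. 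Both proofs ultimately hinge on the same key fact: the cycle must contain a node of $U$ that is active yet receives no influence along $C$.

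The gap is precisely at that key fact. From ``not every node of $C$ is active with $y_{k\ell}=1$ on all of $C$'' you infer the existence of an arc $(k^*,i^*)\in C$ with $z_{i^*}=1$ and $y_{k^*i^*}=0$. This inference fails when no node of the cycle is active: then every $z_\ell$ and $y_{k\ell}$ on $C$ is zero, the sum over $\ell\notin U$ vanishes, the right-hand side equals $\delta(U)>0$, and no active source exists --- indeed the left-hand side can then be $0$ (take $x_i=0$ on the inactive cycle, which \eqref{LCIM1} permits when $z_i=0$), so the step genuinely breaks rather than merely lacking detail. Your argument needs the hypothesis that some node of $V(C)$ is activated; the paper supplies this by asserting at the outset that \eqref{LCIM3} forces $z_i=1$ for some $i\in V(C)$, which is only justified when $b$ is large enough, so you have in effect rediscovered the same unstated assumption rather than closed it. Granting that hypothesis, your case analysis does go through: either all of $V(C)$ is active and \eqref{LCIM4} forces a break arc, or there is an inactive-to-active transition arc $(k,\ell)$ with $y_{k\ell}\le z_k=0$, and the off-$U$ identity $z_\ell=y_{k\ell}$ pushes the break node into $U$. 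One further small point to tighten: your slack bound drops only the two cycle-neighbour terms, but $\omega_{i^*}$ corrects only for $j\notin V(C)$, so chord neighbours $j\in V(C)\cap N(i^*)$ with $y_{ji^*}=1$ contribute $\alpha_{ji^*}-d_{ji^*}\le 0$ that is not accounted for and must be ruled out or absorbed.
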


\begin{proof}
Due to constraint \eqref{LCIM3}, there exists $z_i = 1$ for some $i \in V(C)$. Let $H = \{(k,\ell) \in C: \ell \notin U\}$. We partition $H$ into two disjoint sets $H_0$ and $H_1$ such that $H=H_0 \cup H_1 $ and $H_0 \cap H_1 = \varnothing $, where $H_0 =\{(k,\ell) \in H: k \notin U\}$ and $H_0 =\{(k,\ell) \in H: k \in  U\}$. We distinguish two main cases:
\begin{case}
\normalfont We first consider $z_i = 0$ for all $i \in U$ and $z_i = 1$ for all $i \in V(C) \setminus U$. 
    In this case, the left hand side of the inequality is equal to 0, whereas in the right hand side, we have $k \in V(C)\setminus U$ and $\ell \in \in V(C)\setminus U$, if there exists at least one $y_{k\ell}=1$, we must have $z_{k}=z_{\ell}=1$, which leads to $|V(C)\setminus U| > |H_0|$. Consequently, 
        \begin{align}
         \delta(U) \left(1 - \sum_{i \in V(C) \setminus U}z_i + \sum_{(k,\ell) \in H_0}y_{k\ell} \right) < 0 \notag
    \end{align}
    holds, inequality \eqref{ucinequ} is thus valid.  
 \end{case}
 \begin{case}
 \normalfont Next we consider $z_i = 1$ for all $i \in U$ and $z_i = 0$ for all $i \in V(C) \setminus U$.
    In this case, we must have $y_{k\ell}=0$ for all $(k,\ell) \in H$ as influence exertion towards nodes belong to $V(C)\setminus U$ is unnecessary. Then
    the inequality reduces to 
    \begin{align}
      \sum_{i \in U} \gamma_i \left(x_i + \sum_{j \in N(i)} \alpha_{ji} y_{ji} - \beta_i\right) \geq \delta(U). \notag 
    \end{align}
    Regardless of whether $y_{ji}=0$ for all $i \in N(i)$ or $y_{ji}=1$ for some $j \in N(i)$, there exists at least one index $i \in U$ such that $x_i = h_i$ and $y_{ji}=0$. In other words, at least one node is activated with full incentive payment in order to launch the propagation for nodes in $U$. For other nodes that receive partial or zero incentives, their corresponding term in the left hand side is zero. To compute the possible range of the left hand side, we rearrange the terms and obtain the following
    \begin{align}
        & \sum_{i \in U} \gamma_i (h_i - \beta_i) \notag\\
       = &\sum_{i \in U} \frac{\delta(U)}{(h_i - \beta_i)} (h_i - \beta_i) \notag\\
       =  &\sum_{i \in U} \delta(U). \notag
    \end{align}
    Consequently, the left hand side is at most $|U|\delta(U)$ and at least $\delta(U)$ with one particular node with full incentive $x_i=h_i$, which is valid. This completes the proof. 
\end{case}
\end{proof}

\begin{exa}
 \normalfont Consider a graph $G$ illustrated in Figure \ref{fig:3cycle}. The number next to each arc is the influence weight $d_{ij}$, while the number inside the brackets next to each node is the threshold $h_i$.   
\begin{figure}[H]
    \centering
    \includegraphics[scale=0.5]{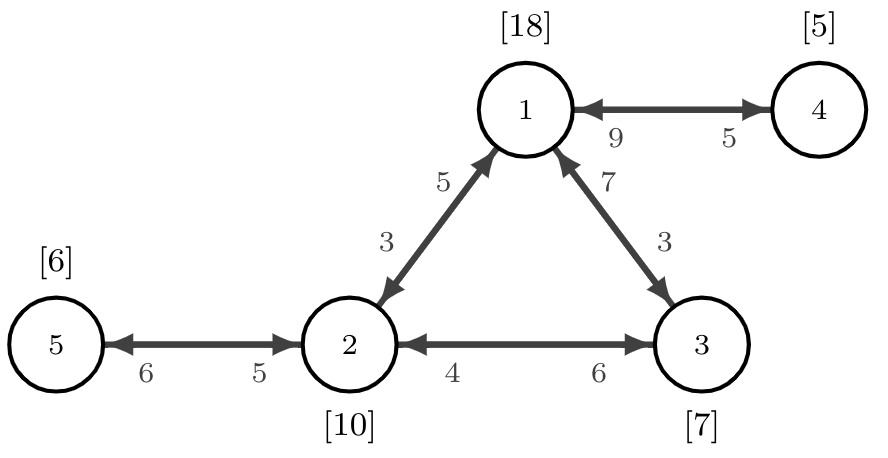}
    \caption{A social network with $n=5$ (Example 2).}
    \label{fig:3cycle}
\end{figure}
For node 2, take $L=\{1,3,5\}$ with $\lambda_2 = 3+4+5-10 = 2$, the corresponding continuous packing inequality \eqref{ucbase} is
\begin{align}
    x_2 + y_{12} + 2y_{32} + 3y_{52} \geq 6z_2. \notag
\end{align}
Similarly, for node 3, take $L=\{1,2\}$ with $\lambda_3 = 6 + 3 -7 = 2$, the corresponding continuous packing inequality \eqref{ucbase} is 
\begin{align}
    x_3 + y_{13} + 4y_{23} \geq 5z_3. \notag
\end{align}
There are two cycles $\{(1,2),(2,3),(3,1)\}$ and $\{(1,3),(3,2),(2,1)\}$ in Figure \ref{fig:3cycle}. For $U = \{2,3\}$ with cycle $C=\{(1,2),(2,3),(3,1)\}$, we have $\omega_2 = 10-6+3-5=2$, $\omega_3 = 7-5=2$, $\delta(\{2,3\})= \texttt{lcm}(2,2)=2$, then the $(U,C)$ inequality is 
\begin{align}
     (x_2 + y_{12} + 2y_{32} + 3y_{52} - 6z_2) + (x_3 + y_{13} + 4y_{23}-5z_3) \geq 2(1-z_1+y_{31}). \notag
\end{align}
Furthermore, consider the cycle $C=\{(1,3),(3,2),(2,1)\}$ from another direction, the $(U,C)$ inequality is 
\begin{align}
     (x_2 + y_{12} + 2y_{32} + 3y_{52} - 6z_2) + (x_3 + y_{13} + 4y_{23}-5z_3) \geq 2(1-z_1+y_{21}). \notag
\end{align}
\end{exa}

Next, we give the condition in which inequality \eqref{ucinequ} is stronger than the generalized cycle elimination constraints. Without loss of generality, we assume that $\delta(U)= 1$ for $U=\varnothing$.

\begin{proposition} \label{prop:dominance}
Inequality \eqref{ucinequ} with $U=\varnothing$ dominates the generalized cycle elimination constraints. 
\end{proposition}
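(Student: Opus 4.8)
The plan is to show that setting $U=\varnothing$ in inequality \eqref{ucinequ} recovers, and in fact tightens, the generalized cycle elimination constraints \eqref{LCIM4}. First I would write out the left-hand side of \eqref{ucinequ} under $U=\varnothing$: since the sum $\sum_{i\in U}\gamma_i(\cdots)$ is empty, it equals $0$, and with the convention $\delta(\varnothing)=1$ the right-hand side becomes $1-\sum_{(k,\ell)\in C}(z_\ell-y_{k\ell})$, because the condition $\ell\notin U$ now holds for every arc of the cycle. Thus inequality \eqref{ucinequ} collapses to
\begin{align}
    0 \geq 1 - \sum_{(k,\ell)\in C}(z_\ell - y_{k\ell}), \notag
\end{align}
which I would rearrange into
\begin{align}
    \sum_{(k,\ell)\in C} y_{k\ell} \leq \Bigl(\sum_{(k,\ell)\in C} z_\ell\Bigr) - 1. \notag
\end{align}

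Next I would compare this with the GCEC \eqref{LCIM4}. The key observation is that on a cycle $C$ each node of $V(C)$ appears exactly once as the head $\ell$ of an arc $(k,\ell)\in C$, so $\sum_{(k,\ell)\in C}z_\ell = \sum_{i\in V(C)}z_i$. Hence the reduced inequality reads $\sum_{(k,\ell)\in C} y_{k\ell} \leq \sum_{i\in V(C)} z_i - 1$. The GCEC, by contrast, bounds the same arc sum by $\sum_{i\in V(C)\setminus\{k\}} z_i = \sum_{i\in V(C)}z_i - z_k$ for a chosen $k$. Since $z_k\in\{0,1\}\leq 1$, subtracting the constant $1$ is always at least as strong as subtracting $z_k$; the two coincide only when $z_k=1$. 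I would therefore conclude that the $U=\varnothing$ instance of \eqref{ucinequ} implies every GCEC for that cycle, establishing dominance.

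The only delicate point, and the step I would treat most carefully, is justifying that the reduced right-hand side is genuinely a \emph{valid} inequality and not merely a formal rearrangement — that is, confirming $\sum_{(k,\ell)\in C} y_{k\ell} \leq \sum_{i\in V(C)} z_i - 1$ holds for all feasible points of $\mathcal{Q}$, which is already guaranteed by the validity of \eqref{ucinequ} proved in Proposition \ref{prop:UC}. Beyond that, the argument is a direct comparison of coefficients once the head-incidence identity $\sum_{(k,\ell)\in C}z_\ell=\sum_{i\in V(C)}z_i$ is in hand, so I expect no substantial obstacle; the care lies in the bookkeeping of which index ranges over $U$ versus $V(C)\setminus U$ when $U$ becomes empty.
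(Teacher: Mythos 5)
Your proposal is correct and follows essentially the same route as the paper: rewrite the $U=\varnothing$ instance as $\sum_{(k,\ell)\in C}(z_\ell-y_{k\ell})\geq 1$, rewrite the GCEC as the same left-hand side bounded below by $z_k$, and conclude dominance from $z_k\leq 1$. Your extra bookkeeping via the head-incidence identity $\sum_{(k,\ell)\in C}z_\ell=\sum_{i\in V(C)}z_i$ is a harmless elaboration of what the paper leaves implicit.
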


\begin{proof}
Consider a particular cycle $C$, the GCEC can be states as 
\begin{align}
\sum_{(i,j)\in C}(z_j - y_{ij}) \geq z_k,  \notag
\end{align}
where $k \in V(C)$ is an arbitrary choice of index among $V(C)$. For a $(U,C)$ inequality \eqref{ucinequ} with $U = \varnothing$ on this cycle, we obtain
\begin{align}
\sum_{(i,j)\in C}(z_j - y_{ij}) \geq 1.  \notag
\end{align}
Clearly, the GCEC is weaker then the $(U,C)$ inequality unless $z_k^*=1$.
Furthermore, if we have $\sum_{(i,j)\in C}(z_j - y_{ij}) < z_k$, then $\sum_{(i,j)\in C}(z_j - y_{ij}) < 1$ must hold. This implies that there exist violated $(U,C)$ inequalities for every violated cycle $C$ identified. 
\end{proof}

\subsection{Separation of $(U,C)$ inequalities} \label{sec:3.3}

Since the size of of inequalities \eqref{ucinequ} is exponential, we explore a separation scheme to find the most violated inequality corresponding to set $U$ in polynomial time. We again assume that $|V(C)|=n$ in this section.

\begin{proposition}\label{sep_prop}
Separation problem for inequality \eqref{ucinequ} can be solved in $O(n^3 \log n)$ time.
\end{proposition}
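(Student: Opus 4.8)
The plan is to show that separating the $(U,C)$ inequality \eqref{ucinequ} over all choices of $U \subseteq V(C)$ for a fixed cycle $C$ reduces to an optimization over the \emph{structure} of $U$, and that this optimization can be solved as a longest-path computation on a directed acyclic graph, as foreshadowed in the paper's outline. First I would fix a fractional point $(\vec{x^*},\vec{y^*},\vec{z^*})$ and rewrite the violation of \eqref{ucinequ} as the condition
\begin{align}
\delta(U)\left(1 - \sum_{(k,\ell)\in C:\,\ell\notin U}(z_\ell^* - y_{k\ell}^*)\right) - \sum_{i\in U}\gamma_i\left(x_i^* + \sum_{j\in N(i)}\alpha_{ji}y_{ji}^* - \beta_i z_i^*\right) > 0. \notag
\end{align}
The main difficulty, and the step I expect to be the chief obstacle, is the coupling introduced by $\delta(U)=\operatorname{lcm}\{\omega_i : i\in U\}$ and $\gamma_i=\delta(U)/\omega_i$: because $\delta(U)$ depends on the whole set $U$ in a nonlinear (number-theoretic) way, the objective is not separable across the nodes of $U$, so one cannot naively pick nodes greedily. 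I would handle this by guessing $\delta := \delta(U)$: observe that the relevant $\delta(U)$ can be factored out so that, once its value is fixed, the per-node contribution $\gamma_i(x_i^* + \sum_j \alpha_{ji}y_{ji}^* - \beta_i z_i^*) = \frac{\delta}{\omega_i}(\cdots)$ and the coverage term $\delta(z_\ell^* - y_{k\ell}^*)$ both become \emph{linear} in the indicator of whether each node lies in $U$. Thus for a fixed candidate value of $\delta$ the separation decouples into an additive objective over nodes and arcs of $C$.

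Second, with $\delta$ fixed I would model the choice of $U$ as a path/selection problem around the cycle. Because $C$ is a simple cycle, the nodes are linearly ordered up to rotation, and the term $\sum_{(k,\ell)\in C:\,\ell\notin U}(z_\ell^*-y_{k\ell}^*)$ depends only on which nodes are excluded from $U$ and on the adjacency along $C$. I would break the cycle by fixing one reference arc (trying each of the $n$ possible break points), turning the cyclic selection into a linear one; then the accumulated objective—node rewards $\frac{\delta}{\omega_i}(\beta_i z_i^* - x_i^* - \sum_j\alpha_{ji}y_{ji}^*)$ for $i\in U$ together with the $\delta(z_\ell^*-y_{k\ell}^*)$ penalties for excluded $\ell$—can be encoded as arc weights in a directed acyclic layered graph whose longest path selects the optimal $U$. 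The longest-path problem on a DAG is solvable in time linear in its size, which here is $O(n)$ per break point, giving $O(n^2)$ for one fixed $\delta$ and a fixed cycle.

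Finally I would account for the cost of ranging over the $\operatorname{lcm}$ value and over cycles. The only candidate values of $\delta(U)$ that matter are least common multiples of subsets of $\{\omega_i : i\in V(C)\}$; I would argue that it suffices to enumerate a polynomially bounded collection of candidate $\delta$ values—at most $O(n)$ of them, e.g.\ by sorting the distinct $\omega_i$ and the relevant products, or by observing (via Lemma \ref{lemma1} and the integrality of the $\omega_i$) that only $O(n)$ distinct useful values arise. Running the $O(n^2)$ DAG longest-path separation for each of these $O(n)$ values yields $O(n^3)$, and the extra $\log n$ factor absorbs the sorting/arithmetic (or binary-search) overhead needed to manage the candidate $\operatorname{lcm}$ values, for a total of $O(n^3\log n)$. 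I would close by noting that since any violated GCEC yields a violated $(U,C)$ inequality with $U=\varnothing$ by Proposition \ref{prop:dominance}, the case $U=\varnothing$ is subsumed as the trivial break of this enumeration, so the algorithm finds the most violated member of the class whenever one exists.
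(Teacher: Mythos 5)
Your reduction to a DAG longest-path computation is in the right spirit, but the way you dispose of what you yourself call the chief obstacle --- the non-separable dependence on $\delta(U)=\operatorname{lcm}\{\omega_i: i\in U\}$ --- does not hold up. You assert that only $O(n)$ candidate values of $\delta$ need to be enumerated, ``e.g.\ by sorting the distinct $\omega_i$ and the relevant products.'' This is false in general: if the $\omega_i$ are pairwise coprime (say distinct primes), the least common multiples of the subsets of $\{\omega_i : i\in V(C)\}$ take up to $2^{|V(C)|}-1$ distinct values, so the guessing step is not polynomially bounded and the $O(n^3\log n)$ total does not follow. There is a second, related problem: even for a fixed guessed value $\delta$, your linearized objective optimizes over \emph{all} subsets $U$, but the resulting maximizer need not satisfy $\operatorname{lcm}\{\omega_i:i\in U\}=\delta$, so the inequality you would write down has coefficients $\gamma_i=\delta/\omega_i$ that do not match the definition in Proposition \ref{prop:UC} (you would need a separate argument that any common multiple, not just the lcm, yields a valid inequality, which you do not supply). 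Finally, your budget $O(n^3\log n)$ is spent entirely on a single cycle, leaving no room for iterating over the violated cycles, which the claimed bound must also cover.

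The paper sidesteps the lcm enumeration entirely by a different structural restriction: it sorts the nodes of the cycle by $\theta_i=x_i^*+\sum_{j}\alpha_{ji}y_{ji}^*-\beta_iz_i^*$ and considers only the $n$ \emph{prefixes} of this order as candidate sets $U$. Each prefix is encoded as a source-to-sink path in a DAG with $n+2$ nodes, and the lcm of that specific prefix is baked directly into the length of its arc to the sink, so no value of $\delta$ is ever guessed and only $n$ lcm computations are needed. This gives $O(n\log n)$ per cycle and $O(n^3\log n)$ over all violated cycles. If you want to salvage your route, you would need either a proof that polynomially many lcm values suffice (which fails in general) or an argument, like the paper's, that restricts attention to a polynomial family of candidate sets $U$ directly.
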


\begin{proof}
Inequality \eqref{ucinequ} is violated if
\begin{align}
  \delta(U) \left(1 - \sum_{(k,\ell) \in C: \ell \notin U}(z_{\ell}^* - y_{k\ell}^*) \right) - \sum_{i \in U} \gamma_i \left(x_i^* + \sum_{j \in N(i)} \alpha_{ji} y_{ji}^* - \beta_i z_i^* \right) > 0. \label{sepUC}
\end{align}
For a given fractional point $(\vec{x^*},\vec{y^*},\vec{z^*}) \in \mathcal{Q}$, we determine a set $U \subseteq V(C)$ such that the left hand side of \eqref{sepUC} is maximized. With the observation in Proposition \ref{prop:dominance}, for every violated cycle detected, we construct a longest path problem on a directed acyclic network to solve the separation problem. 

\begin{figure}
    \centering
    \includegraphics[scale=0.45]{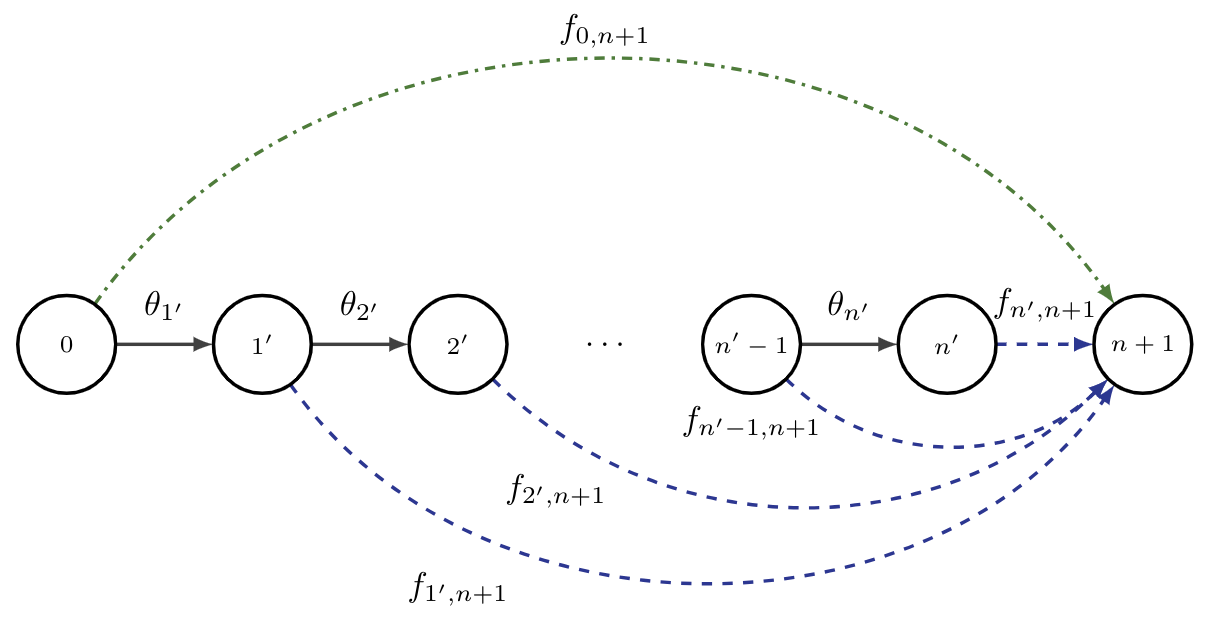}
    \caption{Graph $\mathcal{D}$ for separation of inequality \eqref{ucinequ}}
    \label{fig:sep_uc}
\end{figure}

Consider a directed acyclic network $\mathcal{D} = (\mathcal{V},\mathcal{A})$ with a source vertex $0 \in \mathcal{V}$ and a sink vertex $n + 1 \in \mathcal{V}$. Define $\theta_i = x_i^* + \sum_{j \in N(i)} \alpha_{ji} y_{ji}^* - \beta_i z_i^* $ for all $i \in [1,n]$.  It is possible that there exists multiple inequality \eqref{ucbase} for a fixed $i$. We select the one with the minimum value of $\theta_i$ accordingly. Let index set $\{i^\prime: i \in [1,n]\}$ such that $\theta_{1^\prime} \leq \theta_{2^\prime} \leq \ldots \leq \theta_{n^\prime}$. Node $i^\prime$ is sorted according the the value of $\theta_i$ and each node $i^\prime \in \mathcal{V}$ has a unique mapping to each node $i \in V(C)$. The node set $\mathcal{V}$ is then $\{0,n+1\} \cup \{ i^\prime:  i \in [1,n]\}$.  The arc set is $\mathcal{A} = \{(0,n+1)\} \cup \{(0,1^\prime)\} \cup \{(i^\prime, (i+1)^\prime ) : i \in [1,n-1]\} \cup \{(i^\prime, n+1): i \in [1,n]\}$.

Next, we assign length on each arc in $\mathcal{A}$. For arc $(0,n+1)$, we let
\begin{align}
   f_{0,n+1} = \max \left\{ \omega_i \left(1- \sum_{(k,\ell) \in C: \ell \neq i} (z_{\ell}^*-y_{k\ell}^*) \right) - \theta_i : i \in V(C) \right\}. \notag
\end{align}
Let $f_{0,1^\prime} = \theta_{1^\prime}$. For arcs $\{(i^\prime, (i+1)^\prime ) : i \in [1,n-1]\}$, we set the length $f_{i^\prime,(i+1)^\prime} = \theta_{(i+1)^\prime}$. For arcs $\{(i^\prime, n+1): i \in [1,n]\}$, we set the length
\begin{align}
    & f_{i^\prime,n+1} = \notag \\
    & \delta(\{\omega_j\}_{j=1^\prime}^{i^\prime}) \left(1- \sum_{(k,\ell) \in C: \ell \notin \{j\}_{j=1^\prime}^{i^\prime}} (z_{\ell}^*-y_{k\ell}^*)\right) - \sum_{k=1^\prime}^{i^\prime} \left( \frac{\delta(\{\omega_j\}_{j=1^\prime}^{i^\prime})}{\omega_k}+1\right)\theta_k. \notag
\end{align}
This longest path problem depicted in Figure \ref{fig:sep_uc} can be solved by Dijkstra's algorithm. There exists a violated inequality \eqref{ucinequ} if and only if the longest path is strictly positive and the nodes on this path determine the elements in set $U$. The sorting process of $\theta_i$ takes $O(n\log n)$ time, the evaluation of $f_{0,n+1}$ takes $O(n)$ time, and
the longest path on a directed acyclic graph takes $O(n)$ time as there are $n+2$ nodes and $2n+1$ arcs. Since we have to solve this problem for every violated cycle found by Dijkstra algorithm, the separation algorithm runs $O(n^3\log n)$ time overall.
\end{proof}

\begin{exa} \label{exa:3}
\begin{figure}
    \centering
    \includegraphics[scale=0.5]{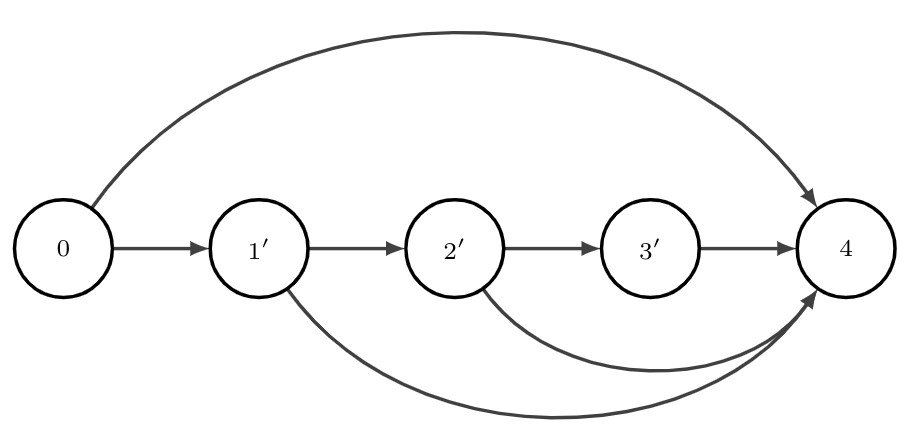}
    \caption{A DAG for separating inequality \eqref{ucinequ} in Example \ref{exa:3}}
    \label{fig:sep_ex}
\end{figure}
\normalfont Consider solving LCIM with $b=3$ on a graph depicted in Figure \ref{fig:3cycle}. The initial linear programming relaxation solution without enumerating any cycle elimination constraints is
\begin{align}
    &(x_1^*,x_2^*,x_3^*, x_4^*,x_5^*) = (0,4.92,0.6,3,0),\notag \\
    &(z_1^*,z_2^*,z_3^*,z_4^*,z_5^*) = (0.6,0.6,0.6,0.6,0.6), \notag \\
    &(y_{12}^*,y_{13}^*, y_{14}^*, y_{21}^*,y_{23}^*, y_{25}^*,y_{31}^*,y_{32}^*,y_{41}^*,y_{52}^*)= (0.36,0,0,0.24,0.6,0.6,0.6,0,0.6,0), \notag
\end{align}
and the objective function value is 8.52. The violated cycle is $\{(1,2),(2,3),(3,1)\}$ for this solution. Then we obtain $\theta_1 = -0.72$, $\theta_2=1.68$ and $\theta_3 = 0$ and sort them in non-decreasing order. Since $\theta_1 < \theta_3 < \theta_2$, the set $\{1^\prime, 2^\prime, 3^\prime\}$ is corresponding to the original node set $\{1,3,2\}$. This leads to a directed acyclic network illustrated in Figure \ref{fig:sep_ex} with new node set $\mathcal{V}=\{0,1^\prime, 2^\prime, 3^\prime, 4\}$. The length of $f_{0,1^\prime}, f_{1^\prime,2^\prime}$ and $f_{2^\prime,3^\prime}$ are $\theta_1, \theta_3$ and $\theta_2$, respectively. The lengths of the remaining arcs are 
\begin{align}
    &f_{0,4} = \max\{3(1-0.24)-(-0.72), 2-1.68, 2\} = 3, \notag\\
    &f_{1^\prime,4} = 3(1-0.24)-(\frac{3}{3}+1)(-0.72) = 3.72, \notag \\
    &f_{2^\prime,4} = 6(1-0.24)-(\frac{6}{3}+1)(-0.72) = 6.72, \notag \\
    &f_{3^\prime,4} = 6-(\frac{6}{3}+1)(-0.72) - (\frac{6}{2}+1)(1.68) = 1.44. \notag
\end{align}
The longest path is $0 \rightarrow 1^\prime \rightarrow 2^\prime \rightarrow 4$, which determines $U=\{1,3\}$ with maximum violation 6. We add the following violated inequality \eqref{ucinequ}
\begin{align}
 2(x_1 + 2y_{21} + 4y_{31} + 6y_{41}-12z_1 )+ 3( x_3 + y_{13} + 4y_{23}-5z_3) \geq 6(1-z_2+y_{12}) \notag
\end{align}
to cut off this fractional solution. We then obtain the new objective function value 10.2 and the solution sets
\begin{align}
    &(x_1^*,x_2^*,x_3^*, x_4^*,x_5^*) = (0,6,2.2,2,0),\notag \\
    &(z_1^*,z_2^*,z_3^*,z_4^*,z_5^*) = (0.6,0.6,0.6,0.6,0.6), \notag \\
    &(y_{12}^*,y_{13}^*, y_{14}^*, y_{21}^*,y_{23}^*, y_{25}^*,y_{31}^*,y_{32}^*,y_{41}^*,y_{52}^*)= (0,0,0.2,0.6,0.6,0.6,0.6,0,0.4,0),\notag
\end{align}
which is very close to the true optimal objective function value 11 in this example. 
\end{exa}

\section{LCIM under equal influence and 100\% adoption} \label{sec:4}
Since the optimal propagation subgraph of LCIM is acyclic, the solution of LCIM on a tree provides a valid lower bound for LCIM on a graph with cycles. Moreover, in practical applications, it is common to assume that both threshold and influence exertion are identical for every node, due to simplicity or lack of accurate estimation. For example, in the unanimous threshold model \cite{chen2009approximability}, $h_i = v_i$ for all $i \in V$ is assumed. This diffusion model is normally considered as the most influence resistant one, and it has applications in complex computer network security problems. In addition, the majority threshold model \cite{valente1996social} assumes $h_i = \lceil \frac{v_i}{2}\rceil$ for all $i \in V$. Both information diffusion models assume that $d_{ij}=1$ for all $(i,j) \in E$. 

In this special case of LCIM where equal influence is assumed for all $i \in V$ and 100\% coverage is required on a tree, we have $d_{ij}=d_i$ for all $i \in V$ and GCEC can be discarded. The LCIM formulation corresponding to equal influence and 100\% coverage on a tree graph is given by
\begin{subequations}
\begin{align}
\text{(LCIM-TE) } \min\limits_{x,y} \quad & \sum_{i \in V} x_i  \notag  \\
\text{s.t.  } & x_i + d_i\sum_{j \in N(i)}y_{ji} \geq h_i \quad \forall i \in V \label{LCIM-E1} \\
& y_{ij}+y_{ji}=1 \quad \forall (i,j) \in E: i < j \label{LCIM-E2} \\
& x \in \mathbb{R}_+^n \label{LCIM-E3} \\
& y \in \mathbb{B}^m. \label{LCIM-E4}
\end{align}
\end{subequations}
Let $\mathcal{S}$ denote the set of feasible solutions to LCIM-TE on a tree graph and let $\mathcal{R}$ denote the set of feasible solutions to the linear programming relaxation of \eqref{LCIM-E1} - \eqref{LCIM-E4}. G{\"u}nne{\c{c}} et al. \cite{gunnecc2020least} prove that LCIM-TE is polynomial solvable on a tree graph. They propose a compact extended formulation with total unimodular constraints by considering three types of incoming influence for every node. However, this extended formulation cannot be applied to unequal influence weights directly, since there would be exponentially many distinct possible types of incoming influence for a node, depending on the influence weights received from its activated neighbors. We give the complete linear description of $\conv{(\mathcal{S})}$ in the original space of variables with additional $O(n)$ constraints and show they are a special case of the continuous cover and continuous packing inequalities by adjusting the influence weights.
\begin{proposition} \label{prop:Econv}
Let $\sigma_i = \lceil \frac{h_i}{d_i} \rceil $  and $g_i = h_i - (\sigma_i -1)d_i$ for all $i \in V$, the inequality 
\begin{align}
    x_i + \min \{g_i,d_i\} \sum_{j \in N(i)}y_{ji} \geq g_i \sigma_i\label{equineq}
\end{align}
is facet-defining for $\conv{(\mathcal{S})}$ if and only if $g_i < d_i$ and $\sigma_i \geq 2$. Furthermore, the complete linear description of $\conv{(\mathcal{S})}$ is given by
\begin{align}
    \conv{(\mathcal{S})} = \left\{(x,y) \in \mathcal{R}: x_i + \min \{g_i,d_i\} \sum_{j \in N(i)}y_{ji} \geq g_i\sigma_i\quad \forall i \in V \right\}. \notag
\end{align}
\end{proposition}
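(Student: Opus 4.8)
The plan is to prove Proposition~\ref{prop:Econv} in two stages: first establishing that inequality~\eqref{equineq} is facet-defining under the stated conditions, and then showing these inequalities together with $\mathcal{R}$ completely describe $\conv{(\mathcal{S})}$. For the first stage, I would begin by verifying validity: since $h_i = (\sigma_i-1)d_i + g_i$ with $0 < g_i \leq d_i$, activating node $i$ requires at least $\sigma_i$ active neighbors when the incentive is zero, or compensating incentive otherwise. The quantity $g_i \sigma_i$ is precisely the right-hand side obtained by treating this as a minimal influencing subset or continuous cover/packing instance. In fact, I expect the cleanest route is to recognize~\eqref{equineq} as a direct specialization of the continuous cover inequality~\eqref{cci} (or equivalently the minimal influencing subset inequality~\eqref{mis}) under the equal-influence substitution $d_{ji} = d_i$: with all weights equal, the cover set $S$ has $\pi_i = g_i$ and the lifting coefficients collapse to the single value $\min\{g_i, d_i\}$ for every neighbor, while $\beta_i = g_i \sigma_i$. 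This immediately transfers both validity and the facet-defining property from the earlier propositions, so the conditions $g_i < d_i$ and $\sigma_i \geq 2$ should correspond exactly to the nondegeneracy requirement $p_i > 0$ together with the cover being nontrivial.

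For the facet count in the original space, I would exhibit explicitly the required number of affinely independent feasible points of $\mathcal{S}$ lying on the face $x_i + \min\{g_i,d_i\}\sum_{j\in N(i)} y_{ji} = g_i\sigma_i$. Because the tree equalities~\eqref{LCIM-E2} force $y_{ij}+y_{ji}=1$ on each edge, the ambient polyhedron lives in dimension $n + m - m = n$ (the $m$ edge-orientation equalities cut the $x\in\mathbb{R}^n,\ y\in\mathbb{R}^m$ space down), so a facet needs one fewer affinely independent point than the full-dimensional case. I would construct points by fixing a feasible orientation giving node $i$ exactly $\sigma_i$ incoming active arcs (setting $x_i$ to the residual), then toggling orientations on edges not incident to $i$—which on a tree can be done independently—to sweep out the remaining degrees of freedom.

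The harder and more essential stage is the complete-description claim. My plan is to argue that every vertex of the polyhedron $\{(x,y)\in\mathcal{R}: \eqref{equineq}\ \forall i\}$ is integral in $y$ (hence lies in $\mathcal{S}$), which gives the reverse inclusion $\supseteq$; the inclusion $\subseteq$ is just validity. The key structural fact I would exploit is that on a \emph{tree}, the constraint system decouples nicely: the edge equalities~\eqref{LCIM-E2} make the $y$-variables live on an orientation polytope of a forest, and inequality~\eqref{equineq} couples each node only to its own incident arcs. I expect the main obstacle to be proving integrality of the $y$-components at every vertex—the naive total-unimodularity argument fails because the coefficients $\min\{g_i,d_i\}$ are not $0/1$. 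I would instead fix the (necessarily basic) fractional $y$ at a putative fractional vertex and show that the acyclic structure of the tree forces a cycle of fractional edges in the support, contradicting that a tree has no cycles; alternatively, I would set up a direct exchange/uncrossing argument or invoke the total-unimodular extended formulation of \cite{gunnecc2020least} and project, showing the projection is exactly the system above. Since the authors explicitly mention that G{\"u}nne{\c{c}} et al.\ give a TU extended formulation, the most economical completion is to prove that~\eqref{equineq} is precisely the projection of that TU system onto the $(x,y)$ space, invoking projection of an integral polyhedron to conclude integrality.
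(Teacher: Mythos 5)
Your first stage matches the paper almost exactly: the paper also establishes the facet-defining claim by showing that \eqref{equineq} is the equal-weight specialization of the continuous cover/packing inequalities, computing $\lambda_i = d_i\sigma_i - h_i$, $g_i = d_i - \lambda_i = \pi_i$, and identifying $\sigma_i$ as $|L|$ and $|N(i)|-\sigma_i+1$ as $|S|$ via Lemma~\ref{lemma1}; the necessity argument (if $g_i = d_i$ or $\sigma_i = 1$ the inequality collapses to \eqref{LCIM-E1}) is also the same. Your additional remark about the reduced ambient dimension caused by the equalities \eqref{LCIM-E2} is a point the paper in fact glosses over, so being explicit there is a genuine improvement, though you do not carry out the point construction.

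The gap is in your second stage, which you yourself identify as the essential one but leave as two unexecuted strategies. The fractional-cycle idea as stated cannot work: the graph is a tree, so there is no cycle of fractional edges to exhibit, and the coefficients $\min\{g_i,d_i\}$ together with the continuous $x_i$ mean the system is not a network matrix, so the standard support-cycle argument for fractional vertices does not apply. What actually closes the argument --- and what the paper does --- is a direct analysis of the tight constraints at a vertex: writing $x_i = \max\{0,\, h_i - (\sigma_i - w)d_i\}$ for $w \in [0,\sigma_i]$ and observing that when both \eqref{LCIM-E1} and \eqref{equineq} are active at node $i$ one gets $(d_i - g_i)\sum_{j\in N(i)} y_{ji} = h_i - g_i\sigma_i = (\sigma_i-1)(d_i-g_i)$, pinning $\sum_{j\in N(i)} y_{ji}$ to an integer; the paper then runs the three cases $w=0$, $w=\sigma_i$, and $w\in[1,\sigma_i-1]$ (the last by induction, using $g_i < d_i$ to decide which of the two inequalities dominates) and invokes \eqref{LCIM-E2} to force each $y_{ij}\in\{0,1\}$. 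Your fallback of projecting the TU extended formulation of G{\"u}nne{\c{c}} et al.\ is plausible in principle but is a substantive computation you have not done, and the paper does not take that route. As written, the completeness claim is therefore not proved by your proposal.
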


\begin{proof}
 If $g_i=d_i$, then $h_i = \sigma_id_i $ and inequality \eqref{equineq} coincides with $\eqref{LCIM-E1}$. Similarly, if $\sigma_i=1$, we also have $g_i = h_i=d_i$ and inequality \eqref{equineq} is reduced to $\eqref{LCIM-E1}$. To prove the sufficiency, we demonstrate that inequality \eqref{equineq} is a special case of the continuous cover and continuous packing inequalities. Observe that $\sigma_i$ is the minimum number that exceeds $h_i$ if multiplied by $d_i$, which implies that  $h_i - (\sigma_i-1)d_i > 0$. Therefore, $\sigma_i$ is the cardinality of set $L$ corresponding to the continuous packing inequality with equal influence weights. We thus obtain $\lambda_i = d_i\sigma_i - h_i$ and $g_i = d_i - \lambda_i$. Equivalently,
 \begin{align*}
     g_i & = h_i - (\sigma_i-1)d_i \\
         & = h_i + \left[|N(i)|-\sigma_i +1 - |N(i)|\right]d_i \\
         & = \pi_i,
 \end{align*}
 hence $|N(i)|-\sigma_i + 1$ is the cardinality of set $S$ in the continuous cover inequality. Following the result of Lemma \ref{lemma1} for the interchangeable relationship between sets $S$ and $L$, $g_i$ and $ g_i\sigma_i$ coincide with the coefficients of the continuous cover and continuous packing inequalities, respectively. 

For the second part of this Proposition, we assume $g_i < d_i $ and $g_i \sigma_i< h_i$ holds for all $i \in V$ without loss of generality. Observe that for $i \in V$, the possible values of $x_i = \max\{0, h_i - (\sigma_i- w)d_i \}$ for $w \in [0, \sigma_i]$, where $\sigma_i- w$ is an implicit upper bound of number of activated neighbors for node $i$, namely, $\sum_{j \in N(i)}y_{ji} \leq \sigma_i- w$. We prove that for  any choice of $w \in [0, \sigma_i]$, we must have integral $(\vec{x},\vec{y})$ in the following three cases:

\begin{case}
\normalfont Suppose $w=0$ and $x_i=0$. Inequality \eqref{LCIM-E1} is reduced to $\sum_{j \in N(i)}y_{ji} \geq \frac{h_i}{d_i}$, which is dominated by inequality \eqref{equineq} with $\sum_{j \in N(i)}y_{ji} \geq \sigma_i$. There exist at least $\sigma_i$ activated neighbors that exert influence toward node $i$. Moreover, from the implicit upper bound  $\sum_{j \in N(i)}y_{ji} \leq \sigma_i$, we have $\sum_{j \in N(i)}y_{ji} = \sigma_i$. Since $x_i = 0$ we must have $y_{ij}=0$ and $y_{ji}=1$ such that $|\{j: j \in N(i)\}| = \sigma_i$ due to constraints \eqref{LCIM-E2}. 
\end{case}

\begin{case}
\normalfont Suppose $w = \sigma_i$ and $x_i = h_i$. Inequality \eqref{LCIM-E1} becomes $\sum_{j \in N(i)}y_{ji} \geq 0$, which dominates inequality \eqref{equineq} with $\sum_{j \in N(i)}y_{ji} \geq \sigma_i - \frac{h_i}{g_i}$ as the right hand side here is strictly negative. Similar to Case 1, we must have $y_{ji} = 0$ and $y_{ij}=1$ for all $j \in N(i)$ due to the implicit upper bound $\sum_{j \in N(i)}y_{ji} \leq 0$ and constraints \eqref{LCIM-E2}. 
\end{case}

\begin{case}
\normalfont Suppose $w \in [1, \sigma_i -1]$. First, let $w = 1$, then $x_i = g_i = h_i - (\sigma_i-1)d_i$. We have $\sum_{j \in N(i)}y_{ji} \geq \sigma_i - 1$ in both inequality \eqref{LCIM-E1} and inequality \eqref{equineq}. Following Case 1 and Case 2, we have $\sum_{j \in N(i)}y_{ji}=\sigma_i-1$ with $y_{ji}=1$ and $y_{ij}=0$ for some $j \in N(i)$ such that $|\{j: j \in N(i)\}| = \sigma_i-1$. Next, let $w=2$ and $\sum_{j \in N(i)}y_{ji} \geq \sigma_i - 2$ holds in inequality \eqref{LCIM-E1}. While in inequality \eqref{equineq}, 
\begin{align*}
    \sum_{j \in N(i)}y_{ji} & \geq  \frac{g_i\sigma_i - h_i + d_i\sigma_i - 2d_i}{g_i}  \\ 
    & \geq \sigma_i - 1 - \frac{d_i}{g_i}. 
\end{align*}
Since we assume $g_i < d_i$, inequality \eqref{LCIM-E1} dominates inequality \eqref{equineq}. By mathematical induction, for $w \in [1, \sigma_i-1]$, we conclude that inequality \eqref{LCIM-E1} $\sum_{j \in N(i)}y_{ji} \geq w$ always dominates inequality \eqref{equineq}. Furthermore, we must have $\sum_{j \in N(i)}y_{ji} = w$ from the implicit upper bound and the value of $y_{ij}$ and $y_{ji}$ are either 0 or 1 following Case 1 and Case 2. We have now demonstrated that inequality \eqref{equineq} is facet-defining and $(\vec{x},\vec{y})$ are integral for any choice of $w \in [0, \sigma_i]$, thus the proof is completed. 
\end{case}
\end{proof}

We close this section by noting that the $(U,C)$ inequalities \eqref{ucinequ} and the separation algorithm in Section \ref{sec:3.3} can be directly applied to LCIM-TE on a graph with cycles.

\begin{proposition} \label{UC_for_E}
The $(U,C)$ inequality for equal influence weights of a cycle is given by
\begin{align}
    \sum_{i \in U} \gamma_i \left(x_i + \alpha_i \sum_{j \in N(i)} y_{ji} - \beta_i \right) \geq \delta(U) \left(1 -|V(C)|+|U|+\sum_{(k,\ell) \in C: \ell \neq i}y_{k\ell} \right) \label{ucinequ-E}
\end{align}
for $\conv{(\mathcal{S})}$.
\end{proposition}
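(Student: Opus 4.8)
The plan is to obtain \eqref{ucinequ-E} as a direct specialization of the general $(U,C)$ inequality \eqref{ucinequ}, whose validity for $\mathcal{Q}$ is already established in Proposition \ref{prop:UC}, and then transfer validity to $\conv{(\mathcal{S})}$. Since every feasible point of LCIM-TE realizes $100\%$ coverage, I would fix $z_i = 1$ for all $i \in V$, and since the influence weights are equal I would impose $\alpha_{ji} = \alpha_i$ for every $j \in N(i)$. Under these two substitutions the inner expression $\sum_{j \in N(i)} \alpha_{ji} y_{ji}$ collapses to $\alpha_i \sum_{j \in N(i)} y_{ji}$ and the term $\beta_i z_i$ becomes $\beta_i$, so the left-hand side of \eqref{ucinequ} immediately assumes the form $\sum_{i \in U}\gamma_i\bigl(x_i + \alpha_i \sum_{j \in N(i)} y_{ji} - \beta_i\bigr)$ displayed in \eqref{ucinequ-E}.

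The substantive work is on the right-hand side. First I would set $z_\ell = 1$ inside the term $\sum_{(k,\ell) \in C:\, \ell \notin U}(z_\ell - y_{k\ell})$, turning it into $\sum_{(k,\ell) \in C:\, \ell \notin U}(1 - y_{k\ell})$; note that $y_{k\ell} \le z_\ell = 1$ via \eqref{LCIM2}, so each summand is nonnegative. The key structural observation is that $C$ is a simple directed cycle, so each node of $V(C)$ is the head of exactly one arc of $C$; consequently the index set $\{(k,\ell) \in C : \ell \notin U\}$ is in bijection with $V(C) \setminus U$, and the constant part of the sum contributes exactly $|V(C)| - |U|$. Substituting this count gives $1 - \sum_{(k,\ell):\ell\notin U}(1 - y_{k\ell}) = 1 - |V(C)| + |U| + \sum_{(k,\ell) \in C:\, \ell \notin U} y_{k\ell}$, and multiplying through by $\delta(U)$ produces precisely the right-hand side of \eqref{ucinequ-E} (where the displayed summation runs over the arcs of $C$ whose head lies outside $U$).

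Finally, validity transfers from the feasible set to its convex hull for free: \eqref{ucinequ-E} is a single linear inequality, and the restriction $z \equiv 1$ with equal weights makes every point of $\mathcal{S}$ a point of $\mathcal{Q}$ at which the unspecialized inequality \eqref{ucinequ} holds, so the specialized inequality is satisfied by all of $\mathcal{S}$ and hence by every convex combination, i.e.\ on $\conv{(\mathcal{S})}$. I expect the only delicate point to be the arc-counting identity for the cycle, namely verifying that under the bidirectional orientation used here each chosen directed cycle $C$ really does assign exactly one incoming arc to each of its nodes, so that the constant $|V(C)| - |U|$ is exact rather than an over- or under-count; once that bijection is pinned down, the remainder is routine algebraic substitution into \eqref{ucinequ}.
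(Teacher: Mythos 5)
Your proposal is correct and follows essentially the same route as the paper: specialize Proposition \ref{prop:UC} by fixing $z_i=1$ for all $i\in V$, replace $\alpha_{ji}$ by the common value $\alpha_i$, and rewrite the right-hand side using the fact that each node of the directed cycle is the head of exactly one arc of $C$, so the constant contribution is $|V(C)|-|U|$. The paper leaves that arc-counting identity implicit ("the right hand side is equivalent to fixing $z_i=1$ for $i\notin U$"), so your explicit bijection argument is simply a more detailed version of the same step.
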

\begin{proof}
The result is deduced from Proposition \ref{prop:UC} by fixing $z_i$ to 1 for all $i \in V$ and substituting the coefficients accordingly. The definition of $\gamma_i$ and $\delta (U)$ follows Proposition \ref{prop:UC}. Let $\omega_i = h_i - \beta_i + |\{j \in N(i): j \notin V(C)\}|(\alpha_i - d_i)$ for all $i \in V(C)$. The right hand side of the inequality is equivalent to fix $z_i=1$ for $i \notin U$. Finally, let $\alpha_i = \min \{g_i,d_i\}$ and $\beta_i = g_i \sigma_i$ as in inequality \eqref{equineq}. 
\end{proof}

\section{Computational Experiments} \label{sec:5}
In this section, we give a detailed description of the data generation and algorithm settings. We test the effectiveness of a delayed cut generation algorithm that incorporates the proposed valid inequalities in solving LCIM under different conditions. All the experiments were conducted on a single thread of a Windows 10 Enterprise server with Intel(R) Core i7-4770 CPU at 3.40 GHz x-64 based processor and 8GB of RAM using Python 3.8 and Gurobi 9.1.2 with default settings as the optimization solver. A 3600 seconds time limit was imposed for each experiment.

\subsection{Data generation and algorithm settings}
We follow the exact data generation scheme in \cite{fischetti2018least}, except for the fact that we generate bidirectional arcs between every two nodes. The small-world network topology for each instance is generated based on \texttt{watts\_strogatz\_graph} function in the \texttt{NetworkX} package of Python \cite{SciPyProceedings_11}. The instances have the following properties: size of node set $n \in \{50,75,100\}$, average node degree $v \in \{4,8,12,16\}$, rewiring probability $q \in \{0.1, 0.3\}$ and we set penetration rate $a \in \{0.1,0.25,0.5,0.75,1\}$. Influence weight $d_{ij}$ for all $(i,j) \in E$ are generated from discrete uniform distribution between 1 and 10. Let $\Delta_i = \sum_{j \in N(i)}d_{ji}$ and $\Upsilon_i$ be a random variable follows normal distribution $\mathcal{N}(0.7\Delta_i,\Delta_i/v_i)$ for all $i \in V$. We set $h_i = \lceil \max\{1, \min\{\Upsilon_i, \Delta_i\}\} \rceil$. For each setting, we generate three instances and report the average.

The effectiveness of two delayed cut generation algorithms and one alternative reformulation are compared in our study:
\begin{enumerate}
    \item \textbf{DEF}: formulation LCIM given by \eqref{LCIM1} - \eqref{LCIM3},  
    \item \textbf{CB}: formulation LCIM with cut-and-branch enhancement, and 
    \item \textbf{LN}: layered-network formulation.
\end{enumerate}

To implement the delayed cut generation, the GCEC \eqref{LCIM4} is separated via lazy constraint callback only for integer solutions for \textbf{DEF} and \textbf{CB}. Gr{\"o}tschel et al. \cite{grotschel1985acyclic} give a shortest path algorithm for separating \eqref{LCIM4} and we utilize the existing shortest path function \texttt{dijkstra\_path} in the \texttt{NetworkX} package to perform such task. For the cut-and-branch enhancement in algorithm \textbf{CB}, we add the proposed inequalities via user-cut callback at the root node to tighten the linear programming relaxation of formulation LCIM. 

We use the layered-network formulation proposed by \cite{manzour2021integer} to replace \eqref{LCIM4} in algorithm \textbf{LN} as this formulation gives a directed acyclic graph with the additional layer assignment variables $l_i$ for all $i \in V$. We are interested in testing whether this cycle-free formulation is beneficial to solve LCIM compared with GCEC \eqref{LCIM4}. In addition, this formulation allows us to keep the mixed 0-1 knapsack substructure so the valid inequalities can be applied to it directly. In our preliminary experiments, we observe that this formulation does not produce better optimality gap for $a \in \{0.1, 0.25, 0.5, 0.75\}$ by relaxing constraints \eqref{LN-1} compared with \textbf{DEF} and \textbf{CB} (average optimality gap $> 90\%$), hence, we only report the computation for $a = 1$ $(b=n)$. The layered-network formulation used in algorithm \textbf{LN} is given by
\begin{align}
    \min \left\{ \sum_{i \in V}x_i: (x,y,z) \text{ satisfies }  \eqref{LCIM1}, \eqref{LCIM3}, \eqref{LN-1} - \eqref{LN-3} \right\}, \notag
\end{align}
where
\begin{subequations}
\begin{align}
    & y_{ij} + y_{ji} = 1 \quad \forall (i,j) \in E \label{LN-1} \\
    & y_{ji} - (n-1)y_{ij} \leq l_j - l_i \quad \forall (i,j) \in E \label{LN-2} \\
    & 1 \leq l_i \leq n \quad \forall i \in V. \label{LN-3}
\end{align}
\end{subequations}

\subsection{Analysis of results}
\begin{table}[ht!]
\caption{Computational performance comparing MIP nodes, cuts, time and unsolved instances on network with $n$=50. Column \textbf{Time[Gap]*} reports the average solution time (in seconds) of the instances that are solved to optimality, and, where applicable, the average of the optimality gap (\%, in brackets) of the instances that are not solved to optimality when reaching time limit. Each asterisk sign indicates an unsolved instance. }
\label{tab:n50}
\centering
\resizebox{\textwidth}{!}{
\begin{tabular}{ccc|ccccccccc}
\toprule
\multicolumn{3}{c}{$n$=50} & \multicolumn{3}{c}{Nodes} & \multicolumn{3}{c}{Cuts} & \multicolumn{3}{c}{Time{[}Gap{]}*} \\ \toprule
$v-m$ & $q$ & $a$ & DEF & CB & LN & DEF & CB & LN & DEF & CB & LN \\ \hline
\multirow{5}{*}{4-200} & \multirow{5}{*}{0.1} & 0.1 & 18 & 12 &  & 12 & 12 &  & 0.30 & 0.31 &  \\
 &  & 0.25 & 121 & 53 &  & 25 & 15 &  & 0.45 & 0.40 &  \\
 &  & 0.5 & 492 & 254 &  & 52 & 21 &  & 0.84 & 0.65 &  \\
 &  & 0.75 & 1562 & 1680 &  & 76 & 45 &  & 2.53 & 2.74 &  \\
 &  & 1 & 921 & 636 & 1041 & 70 & 27 & 233 & 1.15 & 0.78 & 17.84 \\ \hline
\multirow{5}{*}{4-200} & \multirow{5}{*}{0.3} & 0.1 & 1 & 5 &  & 5 & 4 &  & 0.29 & 0.20 &  \\
 &  & 0.25 & 1 & 1 &  & 13 & 3 &  & 0.33 & 0.27 &  \\
 &  & 0.5 & 33 & 9 &  & 28 & 10 &  & 0.41 & 0.39 &  \\
 &  & 0.75 & 129 & 99 &  & 45 & 18 &  & 0.61 & 0.56 &  \\
 &  & 1 & 207 & 1 & 140 & 47 & 16 & 83 & 0.57 & 0.42 & 5.02 \\ \hline
\multirow{5}{*}{8-400} & \multirow{5}{*}{0.1} & 0.1 & 85 & 115 &  & 18 & 15 &  & 1.02 & 1.71 &  \\
 &  & 0.25 & 429 & 392 &  & 14 & 14 &  & 2.41 & 2.85 &  \\
 &  & 0.5 & 6077 & 8831 &  & 57 & 65 &  & 19.08 & 27.31 &  \\
 &  & 0.75 & 457292 & 196845 &  & 189 & 169 &  & 1367.06{[}7.31{]}* & 682.35 &  \\
 &  & 1 & 838587 & 658888 & 69506 & 339 & 341 & 2491 & {[}8.22{]}*** & {[}4.70{]}*** & {[}8.56{]}*** \\ \hline
\multirow{5}{*}{8-400} & \multirow{5}{*}{0.3} & 0.1 & 104 & 85 &  & 19 & 14 &  & 0.96 & 1.49 &  \\
 &  & 0.25 & 375 & 497 &  & 17 & 18 &  & 2.50 & 3.81 &  \\
 &  & 0.5 & 3009 & 3592 &  & 35 & 40 &  & 15.88 & 31.34 &  \\
 &  & 0.75 & 323777 & 185520 &  & 148 & 170 &  & 2028.51{[}5.11{]}* & 1486.88 &  \\
 &  & 1 & 418483 & 404013 & 57946 & 297 & 265 & 2372 & {[}10.47{]}*** & {[}7.13{]}*** & {[}12.08{]}*** \\ \bottomrule
\end{tabular}}
\end{table}

\begin{table}[ht!]
\caption{Computational performance comparing MIP nodes, cuts, time and unsolved instances on network with $n$=75. Column \textbf{Time[Gap]*} reports the average solution time (in seconds) of the instances that are solved to optimality, and, where applicable, the average of the optimality gap (\%, in brackets) of the instances that are not solved to optimality when reaching time limit. Each asterisk sign indicates an unsolved instance. }
\label{tab:n75}
\centering
\resizebox{\textwidth}{!}{
\begin{tabular}{cccccccccccc}
\toprule
\multicolumn{3}{c}{$n$=75} & \multicolumn{3}{c}{Nodes} & \multicolumn{3}{c}{Cuts} & \multicolumn{3}{c}{Time{[}Gap{]}*} \\ \toprule
$v-m$ & $q$ & \multicolumn{1}{c|}{$a$} & DEF & CB & LN & DEF & CB & LN & DEF & CB & LN \\ \hline
\multirow{5}{*}{4-300} & \multirow{5}{*}{0.1} & \multicolumn{1}{c|}{0.1} & 91 & 17 &  & 11 & 8 &  & 0.57 & 0.31 &  \\
 &  & \multicolumn{1}{c|}{0.25} & 833 & 742 &  & 46 & 22 &  & 1.87 & 1.35 &  \\
 &  & \multicolumn{1}{c|}{0.5} & 1094 & 1543 &  & 82 & 48 &  & 2.11 & 3.17 &  \\
 &  & \multicolumn{1}{c|}{0.75} & 3378 & 1880 &  & 119 & 69 &  & 5.92 & 4.17 &  \\
 &  & \multicolumn{1}{c|}{1} & 1753 & 1384 & 1252 & 130 & 69 & 366 & 2.08 & 2.15 & 18.39 \\ \hline
\multirow{5}{*}{4-300} & \multirow{5}{*}{0.3} & \multicolumn{1}{c|}{0.1} & 10 & 12 &  & 17 & 9 &  & 0.64 & 0.70 &  \\
 &  & \multicolumn{1}{c|}{0.25} & 309 & 76 &  & 34 & 13 &  & 0.85 & 0.71 &  \\
 &  & \multicolumn{1}{c|}{0.5} & 805 & 151 &  & 78 & 21 &  & 1.78 & 1.24 &  \\
 &  & \multicolumn{1}{c|}{0.75} & 1537 & 604 &  & 80 & 37 &  & 3.50 & 2.66 &  \\
 &  & \multicolumn{1}{c|}{1} & 976 & 965 & 1067 & 107 & 53 & 445 & 2.44 & 2.12 & 16.63 \\ \hline
\multirow{5}{*}{8-600} & \multirow{5}{*}{0.1} & \multicolumn{1}{c|}{0.1} & 229 & 195 &  & 13 & 10 &  & 2.73 & 3.45 &  \\
 &  & \multicolumn{1}{c|}{0.25} & 3178 & 912 &  & 26 & 20 &  & 18.82 & 7.81 &  \\
 &  & \multicolumn{1}{c|}{0.5} & 216230 & 212500 &  & 192 & 78 &  & 227.75{[}4.77{]}* & 857.06 &  \\
 &  & \multicolumn{1}{c|}{0.75} & 564387 & 348760 &  & 248 & 169 &  & {[}6.15{]}*** & 508.18{[}3.36{]}** &  \\
 &  & \multicolumn{1}{c|}{1} & 360594 & 379776 & 50255 & 368 & 324 & 2923 & {[}9.50{]}*** & {[}5.71{]}*** & {[}9.64{]}*** \\ \hline
\multirow{5}{*}{8-600} & \multirow{5}{*}{0.3} & \multicolumn{1}{c|}{0.1} & 214 & 127 &  & 13 & 5 &  & 2.10 & 2.78 &  \\
 &  & \multicolumn{1}{c|}{0.25} & 1078 & 1224 &  & 19 & 15 &  & 10.54 & 10.27 &  \\
 &  & \multicolumn{1}{c|}{0.5} & 9948 & 3776 &  & 68 & 75 &  & 129.29 & 113.47 &  \\
 &  & \multicolumn{1}{c|}{0.75} & 161508 & 223027 &  & 184 & 194 &  & {[}5.00{]}*** & 3489.72{[}1.99{]}** &  \\
 &  & \multicolumn{1}{c|}{1} & 191592 & 167297 & 41342 & 266 & 356 & 2701 & {[}9.48{]}*** & {[}6.38{]}*** & {[}10.30{]}*** \\ \hline
\multicolumn{1}{l}{\multirow{5}{*}{12-900}} & \multirow{5}{*}{0.1} & \multicolumn{1}{c|}{0.1} & 812 & 635 &  & 9 & 10 &  & 13.93 & 18.00 &  \\
\multicolumn{1}{l}{} &  & \multicolumn{1}{c|}{0.25} & 1803 & 2376 &  & 18 & 13 &  & 53.28 & 62.91 &  \\
\multicolumn{1}{l}{} &  & \multicolumn{1}{c|}{0.5} & 369964 & 229309 &  & 36 & 63 &  & 2803.78{[}2.11{]}* & {[}2.96{]}*** &  \\
\multicolumn{1}{l}{} &  & \multicolumn{1}{c|}{0.75} & 18614 & 31354 &  & 111 & 144 &  & {[}13.68{]}*** & {[}10.28{]}*** &  \\
\multicolumn{1}{l}{} &  & \multicolumn{1}{c|}{1} & 48188 & 33974 & 26375 & 228 & 230 & 3317 & {[}16.49{]}*** & {[}16.18{]}*** & {[}17.85{]}*** \\ \hline
\multicolumn{1}{l}{\multirow{5}{*}{12-900}} & \multirow{5}{*}{0.3} & \multicolumn{1}{c|}{0.1} & 1163 & 1275 &  & 10 & 2 &  & 15.08 & 30.70 &  \\
\multicolumn{1}{l}{} &  & \multicolumn{1}{c|}{0.25} & 3757 & 5019 &  & 42 & 19 &  & 115.19 & 224.71 &  \\
\multicolumn{1}{l}{} &  & \multicolumn{1}{c|}{0.5} & 27217 & 22714 &  & 35 & 48 &  & 1363.66{[}2.83{]}* & 1827.81{[}9.39{]}* &  \\
\multicolumn{1}{l}{} &  & \multicolumn{1}{c|}{0.75} & 11042 & 12625 &  & 58 & 120 &  & {[}24.75{]}*** & {[}15.69{]}*** &  \\
\multicolumn{1}{l}{} &  & \multicolumn{1}{c|}{1} & 20954 & 25899 & 23253 & 199 & 283 & 3071 & {[}22.71{]}*** & {[}17.21{]}*** & {[}26.88{]}*** \\ \bottomrule
\end{tabular}}
\end{table}

We summarize our computational results in TABLE \ref{tab:n50}, \ref{tab:n75} and \ref{tab:n100} under various settings of $(n,m,v,q,a)$. We report the average number of branch-and-cut tree nodes explored in the column \textbf{Nodes}. The column \textbf{Cuts} shows the average number of Gurobi cuts added during the optimization process. In column \textbf{Time[Gap]*} we report the average solution time (in seconds) of the instances that are solved to optimality, and the average of the optimality gap of the instances that are not solved to optimality when reaching time limit (in brackets). Each asterisk sign indicates an unsolved instance and the gap is calculated by 100 $\times (\texttt{ub}-\texttt{lb})/\texttt{lb}$ where \texttt{ub} and \texttt{lb} are the best integer feasible solution obtained and best lower bound generated by the algorithm within time limit, respectively. 

We observe that the major factor that contributes to the unsolved instances with positive optimality gap is the average node degree rather than the number of nodes. This observation can be justified by comparing the instances $(n,v,m)$ = (100,4,400) and $(n,v,m)$ = (50,8,400), as the former is easier to solve than the later. A similar observation is also established in \cite{fischetti2018least} in their set covering formulation using the price-cut-and-branch algorithm. Despite the average number of nodes and cuts are the greatest in \textbf{LN} for $n$=50, there exists no clear domination relationship in columns \textbf{Nodes} and \textbf{Cuts} between \textbf{DEF}, \textbf{CB} and \textbf{LN} for $n$=75 or 100. Moreover, the average number of cuts added is not very large, which indicates that Gurobi cuts do not complement the optimization process. For the unsolved instances in Table \ref{tab:n50} and \ref{tab:n75}, \textbf{CB} outperforms \textbf{DEF} and \textbf{LN} except for instances $(n,v,m,q,a)$ = (75,12,900,0.1,0.5) and (75,12,900,0.3,0.5). In Table \ref{tab:n100} where $n$=100, \textbf{CB} still outperforms \textbf{DEF} and \textbf{LN} for $v \in$\{4,8,12\} except for one instance (100,12,1200,0.3,0.5), where the optimality gap difference is 0.92\%. The common setting $a$=0.5 shared in these exceptions suggests that the symmetry created by the cardinality constraint requires additional improvement. We also notice that \textbf{LN} produces better optimality gap than \textbf{DEF} for instances (100,8,800,0.1,1) and (100,12,1200,0.1,1). However, \textbf{LN} suffers from slow improvement of both upper and lower bound and results in larger ending gap in general in our experiments. We begin to see the degraded performance of algorithm \textbf{CB} for instances with $v$=16. A large number of valid inequalities added to the root node could be a possible reason that decelerates the optimization process. Five out of ten results generated by \textbf{CB} under this category are no better than \textbf{DEF}. Nevertheless, the average optimality gap produced by \textbf{CB} for these five instances are 1.49\% higher than \textbf{DEF}. To sum up, although the linear programming relaxation of the arc-based formulation is very weak with zero objective function value in most cases, our proposed valid inequalities significantly improve the strength of the lower bound and effectively reduce or close the optimality gap.

\begin{table}[H]
\caption{Computational performance comparing MIP nodes, cuts, time and unsolved instances on network with $n$=100. Column \textbf{Time[Gap]*} reports the average solution time (in seconds) of the instances that are solved to optimality, and, where applicable, the average of the optimality gap (\%, in brackets) of the instances that are not solved to optimality when reaching time limit. Each asterisk sign indicates an unsolved instance. }
\label{tab:n100}
\centering
\resizebox{\textwidth}{!}{
\begin{tabular}{cccccccccccc}
\toprule
\multicolumn{3}{c}{$n$=100} & \multicolumn{3}{c}{Nodes} & \multicolumn{3}{c}{Cuts} & \multicolumn{3}{c}{Time{[}Gap{]}*} \\ \toprule
$v-m$ & $q$ & \multicolumn{1}{c|}{$a$} & DEF & CB & LN & DEF & CB & LN & DEF & CB & LN \\ \hline
\multirow{5}{*}{4-400} & \multirow{5}{*}{0.1} & \multicolumn{1}{c|}{0.1} & 28 & 1 &  & 11 & 4 &  & 0.95 & 0.85 &  \\
 &  & \multicolumn{1}{c|}{0.25} & 148 & 118 &  & 50 & 12 &  & 1.24 & 1.09 &  \\
 &  & \multicolumn{1}{c|}{0.5} & 1554 & 998 &  & 80 & 33 &  & 3.77 & 2.49 &  \\
 &  & \multicolumn{1}{c|}{0.75} & 1705 & 834 &  & 128 & 61 &  & 6.58 & 3.62 &  \\
 &  & \multicolumn{1}{c|}{1} & 669 & 1600 & 670 & 120 & 63 & 318 & 3.09 & 2.67 & 13.01 \\ \hline
\multirow{5}{*}{4-400} & \multirow{5}{*}{0.3} & \multicolumn{1}{c|}{0.1} & 15 & 11 &  & 15 & 3 &  & 0.53 & 0.36 &  \\
 &  & \multicolumn{1}{c|}{0.25} & 83 & 123 &  & 22 & 7 &  & 1.27 & 1.07 &  \\
 &  & \multicolumn{1}{c|}{0.5} & 365 & 391 &  & 92 & 26 &  & 2.40 & 1.60 &  \\
 &  & \multicolumn{1}{c|}{0.75} & 2045 & 697 &  & 117 & 53 &  & 8.55 & 2.92 &  \\
 &  & \multicolumn{1}{c|}{1} & 904 & 213 & 994 & 124 & 42 & 383 & 3.84 & 1.52 & 18.65 \\ \hline
\multirow{5}{*}{8-800} & \multirow{5}{*}{0.1} & \multicolumn{1}{c|}{0.1} & 511 & 295 &  & 17 & 7 &  & 7.11 & 7.28 &  \\
 &  & \multicolumn{1}{c|}{0.25} & 16138 & 9611 &  & 16 & 59 &  & 96.09 & 88.48 &  \\
 &  & \multicolumn{1}{c|}{0.5} & 364834 & 278716 &  & 297 & 157 &  & 1499.18{[}4.20{]}** & 1247.35{[}2.37{]}* &  \\
 &  & \multicolumn{1}{c|}{0.75} & 175014 & 231825 &  & 282 & 306 &  & {[}9.81{]}*** & {[}7.10{]}*** &  \\
 &  & \multicolumn{1}{c|}{1} & 121123 & 154716 & 46686 & 374 & 439 & 2988 & {[}13.84{]}*** & {[}8.39{]}*** & {[}13.02{]}*** \\ \hline
\multirow{5}{*}{8-800} & \multirow{5}{*}{0.3} & \multicolumn{1}{c|}{0.1} & 434 & 345 &  & 16 & 12 &  & 5.74 & 7.27 &  \\
 &  & \multicolumn{1}{c|}{0.25} & 5682 & 4509 &  & 19 & 21 &  & 56.08 & 72.36 &  \\
 &  & \multicolumn{1}{c|}{0.5} & 65328 & 25567 &  & 91 & 129 &  & 735.63{[}1.79{]}* & 685.49 &  \\
 &  & \multicolumn{1}{c|}{0.75} & 13119 & 37109 &  & 190 & 233 &  & {[}14.13{]}*** & {[}6.43{]}*** &  \\
 &  & \multicolumn{1}{c|}{1} & 40550 & 68921 & 35379 & 261 & 407 & 2849 & {[}15.19{]}*** & {[}9.39{]}*** & {[}17.02{]}*** \\ \hline
\multicolumn{1}{l}{\multirow{5}{*}{12-1200}} & \multicolumn{1}{l}{\multirow{5}{*}{0.1}} & \multicolumn{1}{c|}{0.1} & 1866 & 1249 &  & 6 & 4 &  & 44.77 & 83.37 &  \\
\multicolumn{1}{l}{} & \multicolumn{1}{l}{} & \multicolumn{1}{c|}{0.25} & 6425 & 6356 &  & 29 & 48 &  & 296.98 & 372.08 &  \\
\multicolumn{1}{l}{} & \multicolumn{1}{l}{} & \multicolumn{1}{c|}{0.5} & 40341 & 41264 &  & 41 & 52 &  & {[}11.17{]}*** & {[}9.17{]}*** &  \\
\multicolumn{1}{l}{} & \multicolumn{1}{l}{} & \multicolumn{1}{c|}{0.75} & 8292 & 7915 &  & 143 & 173 &  & {[}19.69{]}*** & {[}16.40{]}*** &  \\
\multicolumn{1}{l}{} & \multicolumn{1}{l}{} & \multicolumn{1}{c|}{1} & 15506 & 16755 & 20873 & 187 & 388 & 4447 & {[}29.78{]}*** & {[}18.11{]}*** & {[}20.74{]}*** \\ \hline
\multicolumn{1}{l}{\multirow{5}{*}{12-1200}} & \multicolumn{1}{l}{\multirow{5}{*}{0.3}} & \multicolumn{1}{c|}{0.1} & 3984 & 2870 &  & 0 & 9 &  & 39.06 & 145.17 &  \\
\multicolumn{1}{l}{} & \multicolumn{1}{l}{} & \multicolumn{1}{c|}{0.25} & 55111 & 33660 &  & 14 & 65 &  & 1187.31 & 2191.00 &  \\
\multicolumn{1}{l}{} & \multicolumn{1}{l}{} & \multicolumn{1}{c|}{0.5} & 15457 & 11751 &  & 44 & 51 &  & {[}21.00{]}*** & {[}21.92{]}*** &  \\
\multicolumn{1}{l}{} & \multicolumn{1}{l}{} & \multicolumn{1}{c|}{0.75} & 4426 & 5168 &  & 102 & 128 &  & {[}30.63{]}*** & {[}24.46{]}*** &  \\
\multicolumn{1}{l}{} & \multicolumn{1}{l}{} & \multicolumn{1}{c|}{1} & 8704 & 10036 & 20128 & 174 & 241 & 3688 & {[}21.11{]}*** & {[}19.05{]}*** & {[}21.31{]}*** \\ \hline
\multicolumn{1}{l}{\multirow{5}{*}{16-1600}} & \multicolumn{1}{l}{\multirow{5}{*}{0.1}} & \multicolumn{1}{c|}{0.1} & 4585 & 5403 &  & 14 & 11 &  & 193.29 & 474.52 &  \\
\multicolumn{1}{l}{} & \multicolumn{1}{l}{} & \multicolumn{1}{c|}{0.25} & 9590 & 11593 &  & 26 & 28 &  & 1244.07 & 2010.94 &  \\
\multicolumn{1}{l}{} & \multicolumn{1}{l}{} & \multicolumn{1}{c|}{0.5} & 5384 & 4268 &  & 9 & 14 &  & {[}22.36{]}*** & {[}22.42{]}*** &  \\
\multicolumn{1}{l}{} & \multicolumn{1}{l}{} & \multicolumn{1}{c|}{0.75} & 3360 & 3591 &  & 68 & 92 &  & {[}32.71{]}*** & {[}30.71{]}*** &  \\
\multicolumn{1}{l}{} & \multicolumn{1}{l}{} & \multicolumn{1}{c|}{1} & 8464 & 6772 & 11907 & 148 & 158 & 3443 & {[}22.47{]}*** & {[}23.43{]}*** & {[}24.10{]}*** \\ \hline
\multicolumn{1}{l}{\multirow{5}{*}{16-1600}} & \multicolumn{1}{l}{\multirow{5}{*}{0.3}} & \multicolumn{1}{c|}{0.1} & 11982 & 13063 &  & 5 & 20 &  & 286.70 & 852.37 &  \\
\multicolumn{1}{l}{} & \multicolumn{1}{l}{} & \multicolumn{1}{c|}{0.25} & 51785 & 23015 &  & 19 & 33 &  & {[}19.46{]}*** & {[}22.45{]}*** &  \\
\multicolumn{1}{l}{} & \multicolumn{1}{l}{} & \multicolumn{1}{c|}{0.5} & 5271 & 4520 &  & 17 & 18 &  & {[}34.36{]}*** & {[}35.58{]}*** &  \\
\multicolumn{1}{l}{} & \multicolumn{1}{l}{} & \multicolumn{1}{c|}{0.75} & 2636 & 2724 &  & 85 & 89 &  & {[}42.24{]}*** & {[}44.46{]}*** &  \\
\multicolumn{1}{l}{} & \multicolumn{1}{l}{} & \multicolumn{1}{c|}{1} & 4441 & 4485 & 11802 & 78 & 91 & 3408 & {[}25.78{]}*** & {[}24.78{]}*** & {[}25.79{]}*** \\ \bottomrule
\end{tabular}}
\end{table}

\section{Conclusion} \label{sec:6}
We study the least cost influence maximization problem in social networks where the influence propagation behavior among users is captured by the deterministic linear threshold model. A typical application of this problem is to obtain an estimation of the partial incentives given to early product adopters in viral marketing while achieving a desired coverage rate by the end of information spreading. We focus on the case where influence weights exerted from peers are heterogeneous and derive several classes of valid inequalities from the hidden mixed 0-1 knapsack substructure in the mixed-integer programming formulation. Despite the fact that the set of feasible solutions is hard to convexify due to the knapsack constraints and the linear programming relaxation being very weak, our computational experiments show that the delayed cut generation algorithm exploiting these inequalities can effectively reduce the optimality gap. For the case with equal influence weights and 100\% adoption on a tree, we characterize the complete linear description of the convex hull in its natural space of incentive, arc propagation and activation variables. The convex hull of the LCIM with equal influence weights and arbitrary adoption on a tree is still an open question and requires further investigation. We observe that the bottleneck of computational improvements are mostly in the instances where 100\% adoption is not required. A promising future research direction is to identify more explicit valid inequalities from the intersections of the cardinality constraint that controls the penetration rate with the rest of the formulation. 

\section*{Acknowledgements}
We are grateful to the two anonymous reviewers who provided constructive feedback that helped us improve the content of this paper. We also thank Demetrios Papazaharias in the early discussion of the separation problem for the minimum influencing subset inequalities. This material is based upon work supported by the Air Force Research Laboratory (AFRL) Mathematical Modeling and Optimization Institute and AFRL award FA8651-16-2-0009. Preliminary results of this study were presented at the 9th International Conference on Computational Data and Social Networks (CSoNet 2020, December 11--13, 2020, Dallas, TX) and published in the respective conference proceedings volume \cite{chen2020cutting}.

\section*{Data Availability Statement}
The datasets analyzed during the current study are available from the corresponding author on reasonable request.

\bibliographystyle{unsrt}
\bibliography{CBP_2022}
\end{document}